\RequirePackage{fix-cm}

\documentclass[a4paper]{article}
\usepackage{amsmath, amssymb, amsfonts, amsthm}
\usepackage{graphicx, array}

\newcommand{\sint}{\mathop{\mathrm{int}}\nolimits}

\newcolumntype{C}{>{\centering\arraybackslash} m{.16\linewidth} } 
\newcolumntype{L}{>{\centering\arraybackslash} m{.25\linewidth} } 

\newtheorem{theorem}{Theorem}
\newtheorem{corollary}{Corollary}
\newtheorem{lemma}{Lemma}
\newtheorem{definition}{Definition}

\begin{document}

\title{Inverse Graphical Method for Global Optimization and
    Application to Design Centering Problem}
\author{Sergey Karpukhin\thanks{email: contact@kserz.rocks}
\thanks{Independent researcher,
graduated from the Lomonosov Moscow State University.}
}
\maketitle

\begin{abstract}
    Consider the problem of finding an optimal value of some objective
    functional subject to constraints over numerical domain. This type
    of problem arises frequently in practical engineering tasks.
    Nowdays almost all general methods for solving such a problem are based on
    user-supplied routines computing the objective value at some points.
    We study another approach called inverse relying on some procedure
    to estimate the set of points instead having objective values bounded by
    a specified constant.

    In particular, we present an inverse optimization algorithm derived from
    the bisection of the objective range.
    In case of seeking a proven global optimal solution
    inherently requiring many computations, and a problem with some kind of
    coherency utilized in estimation procedure, the inverse scheme is much
    more efficient than conventional ones. An example of such a problem, namely
    the design centering, is studied to compare the approaches.
\end{abstract}

%===INTRO=======================================================================
\section{Introduction}
The numerical global optimization problems are of big importance for science,
economics and technology. They are studied extensively for decades
\cite{Horst1995}, \cite{Nocedal2006}, \cite{Scholz2012}, \cite{Tuy2016}.
We focus specifically on deterministic approaches, i.e. those leading to a
proven result inevitably. The problem is usually formulated as follows:
\begin{equation}
    \begin{gathered}
        f(x) \to \min \\
        \mbox{s.t.}\; x\in F \subset X,
    \end{gathered}
\end{equation}
where $f\colon X\to\mathbb{R}$ is an objective functional,
$X$ is a problem domain and $F$ is some
part of the domain called \emph{feasible region}.
The problem is \emph{constrained} if $F \ne X$ and \emph{unconstrained}
otherwise. Restriction $x\in F$ is often
formulated in terms of some equality or inequality constraints, but this is
not necessarily. Note that in
case of constant $f(x)$ the problem is called \emph{constraint satisfaction}
and there are plenty of methods solving this kind of problem.

According to the literature,
it is almost always assumed in general optimization methods
that we have some means to compute or bound $f(x)$ over specified subset
$X'\subset X$, and the solution algorithms are then based on such
routines \cite{Scholz2012}, \cite{Tuy2016}, \cite{Sergeyev2015}.
Custom algorithms are usually
developed for particular explicit forms of objective and constraints
\cite{Horst1995}, \cite{Neumaier2004}. 
We hereafter call this conventional scheme \textbf{straight}.
In this work we present an opposite
approach to global optimization called \textbf{inverse}.
It is based on computing domain subsets for specified objective bounds, see
Fig.~\ref{inverse_figure}.

\begin{figure}[h]
    \begin{center}
        \includegraphics[width=0.8\textwidth]{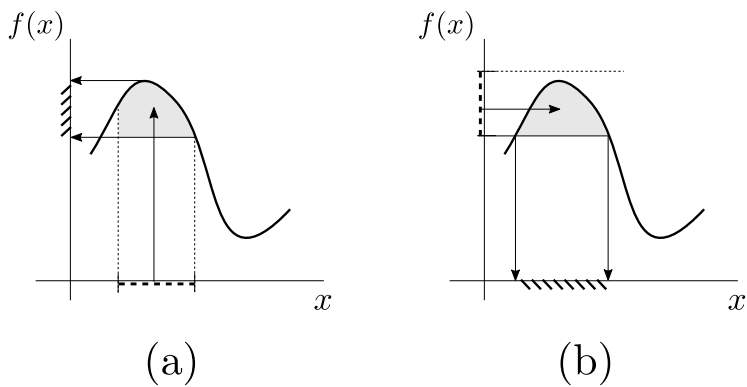}
    \end{center}
    \caption{Computations within optimization: (a) straight(conventional),
    (b) inverse}
    \label{inverse_figure}
\end{figure}

Such framework is rather common in constraint satisfaction problems, but to
the best of our knowledge no similar practical methods have been developed for
optimization problems. It is worth noting that
the performance of the new approach highly depends on a good
domain estimation procedure provided for a particular problem. We
present an example
problem at the end of the article demonstrating a superiority of the inverse
scheme while looking for a proven global optimal solution.

% Structure
The rest of the article is organized as follows. We start with a brief review
of the related research. Then we proceed to development
of the inverse graphical method and proof of convergence.
The second part of this work deals with a design centering problem
arising in the diamond cutting industry. We discuss implementation details
and compare the inverse approach to the state of
the art technique as well as to the publicly available software libraries.

%===Related concepts============================================================
\section{Related work}
The closest optimization scheme so far has been pioneered
by Sergey~P. Shary \cite{Shary2001} and studied in some works afterwards
\cite{Abaffy2018}, \cite{Contreras2016}. We advance this study by switching
entirely to inverse computations as shown in Fig.~\ref{inverse_figure}.
This is done theoretically by stating sufficient convergency conditions
and practically by developing complete algorithm for a specific problem.
Note that although the works mentioned above deal with the algorthims in terms
of equation solution, which can be thought of as a kind of inverse technique,
the latter is still reduced to straight interval computations there. In practice
this often means utilizing rather complex and expensive procedures. By contrast,
the inverse scheme based on domain estimations could be implemented much simpler
and faster for particular problems.
Besides, we involve an inherently approximate computation framework
providing more freedom to choose effective algorithms.

In addition to the above, we must note the following closely related research.
The main idea of computing the domain region
satisfying some restrictions is very common in constraint satisfaction study
\cite{Rossi2006}. One notable example is the
method based on the feasible region approximation with the help of spatial
trees \cite{SamHaroud1996}. The inverse graphical
method presented later in this article
adapts similar ideas to the
optimization problem. It can be thought of as a sequence of constraint
satisfaction problems with added parametrized optimization constraint
\cite{Emden2003}, \cite{Rossi2006}.
More ideas on interchange between global optimization and
constraint satisfaction can be found in \cite{Hooker2006}, \cite{Hooker2017}
and references therein.

Besides, the inverse optimization method
can be thought of as a sequence of approximate
problem solutions parametrized by the accuracy, or equivalently by the
bounds on acceptable objective value. It is tightly connected to the successive
approximation global optimization methods \cite{Horst1996} as well as
to the relief indicator method \cite{Thach1990}. Compared to them the
top-level algorithm scheme and the theoretical results are alike, but
there are two differences of the new method. First, we connect tightly
particular domain approximations and objective value boundaries.
Precisely, we choose numerical approximations of the problem instead
of adding
new analytic constraints, and estimate feasible regions with the same tolerance
as objective in contrast to accurate solving.
Secondly, we involve easier graphical domain estimation technique
at every iteration instead of constructing complex relaxations and
underestimations \cite{Tuy2016}. These aspects together
allow us to simplify the coarse steps of the overall algorithm and achieve a
performance boost.

Furthermore, we can note non-complete optimization
procedures called simulated annealing and threshold accepting
\cite{Dueck1990}. They reduce the accuracy value as a
parameter much like the inverse approach.
However, these methods are non-deterministic, thus differ entirely from the
subject of this article.

Now let us describe briefly the relations between the inverse scheme and
conventional global optimization techniques.
First, the inverse approach is in fact a numerical search of the
optimal objective value. It is close to the well-known bisection
method and its derivatives \cite{Wood1992}, but we bisect only on the objective
range and use the domain estimation as a decision value.
Comparing to the common branch-and-bound technique \cite{Scholz2012}
the difference is the same:
we subdivide("branch") the objective value range and
estimate("bound") the domain instead of
subdividing the domain and estimating the objective. 
Secondly, our
approach can be treated as the opposite to the penalty function method
\cite{Nocedal2006} incorporating the constraints into the objective
function.
In contrast, we incorporate the bounded objective value as a new
constraint and search for feasible points at every iteration of
the inverse method.

At last, all the references concerning the design centering problem and
the practical application of the inverse scheme are discussed in the second
part of the article.

%===GENERAL=====================================================================
\section{General inverse graphical optimization method}
\label{gen_section}
This section deals with the basic study of the inverse
method. We will first proceed through the algorithm and then
move to the proof of its
convergence.

We derive the inverse graphical optimization method from the bisection methods
\cite{Shary2001},\cite{Wood1992},\cite{Abaffy2018}. In particular, we split
the infinite objective range by some threshold value and then eliminate a
part of it at every step of the algorithm. The elimination is done through the
estimation of the domain region having objective values within the given range.
With the objective value restricted to a range
the problem transforms to the constraint
satisfaction one and is solved by approximate procedures similarly to
\cite{SamHaroud1996}. The global optimal solution is then acquired reducing
the objective range while ensuring that the corresponding feasible region
is not empty. Refer to Fig.~\ref{inverse_scheme_figure} for general scheme.

\begin{figure}[h]
    \begin{center}
        \includegraphics[width=0.9\textwidth]{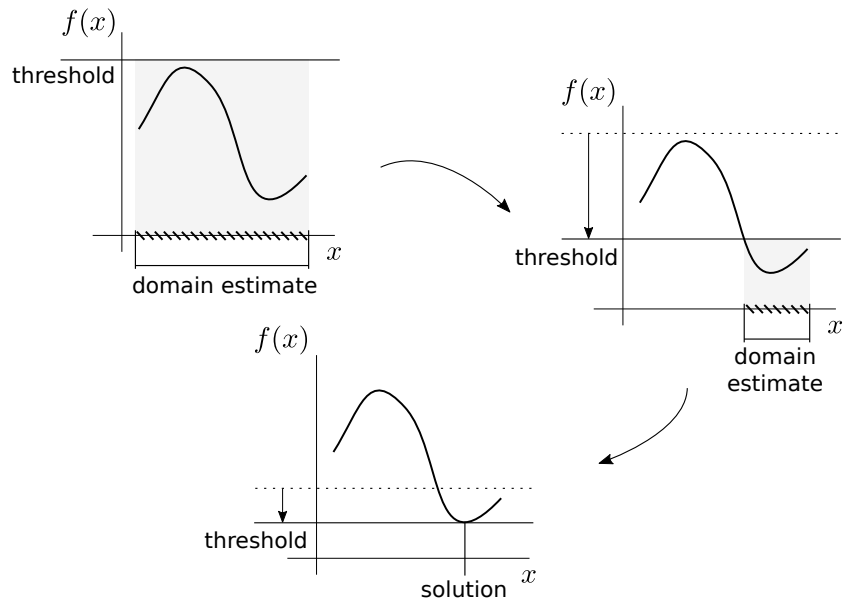}
    \end{center}
    \caption{General scheme of the inverse optimization technique}
    \label{inverse_scheme_figure}
\end{figure}

Note that the core part of the inverse method is a domain estimation
procedure applied at every step. One could imagine many possible variants,
but we describe a specific one leaving the rest to the further research.
Precisely, we approximate the domain by equally-sized hyperrectangles and
then mark them according to the objective values. This approach is very
close to the raster computer graphics. Moreover, we use some rasterization
techniques in the application. Thus we call our variant of the
inverse method \textbf{graphical}.

% Notation: B - boxes, r - reals, X - problem domain, t - threshold
% f - objective function, F - feasible region
Let us formalize the problem to solve. In this article we consider the
following optimization problem:
\begin{equation}
    \begin{gathered}
        f(x) \to \min \\
        \mbox{s.t.}\; x\in F \subset X
    \end{gathered}
    \label{problem}
\end{equation}
where $f\colon X \to \mathbb{R}$ is a continuous function,
$X = [x_1^L, x_1^R]\times[x_2^L, x_2^R]\times\cdots\times[x_n^L, x_n^R]
\subset \mathbb{R}^n$ is a compact hyperrectangle in Euclidean space
and
$F\subset X$ is a closed region called the \textbf{feasible region}.
As a restriction of our study we additionally assume that the feasible region
contains an interior point, this also leads to existence of the solution.
Note that in practice one could take the feasible region bounding box as $X$.

In the rest of the article by $\mathcal{B}$ we denote the set of all
hyperrectangles in $\mathbb{R}^n$ and any $B\in\mathcal{B}$ we call simply a
\textbf{box}. Note that all the boxes are closed sets.
For any $B\in\mathcal{B}$ by $\delta(B)$ we denote the diameter
of $B$ as a set, i.e. the maximum distance between points in $B$. For
any set $S\subset\mathbb{R}^n$ by $\sint(S)$ we denote the interior
of $S$ and by $\partial S$ we denote the boundary of $S$.

\subsection{The algorithm}
\label{algorithm_subsection}
Now we develop the inverse graphical optimization algorithm. Consider a
problem of form (\ref{problem}) and a \textbf{target accuracy
$\varepsilon > 0$} for the solution.
The key feature of the algorithm is the assumption that
we have a \textbf{domain estimation procedure} for a particular problem.
By the domain estimation procedure we consider some algorithm to compute
a relation
\[
    C\colon \mathcal{B}\times \mathbb{R} \to \{0, \beta, 1\}
\]
such that for
any box $B\in\mathcal{B}$ and threshold $t\in\mathbb{R}$
\[
\begin{aligned}
    C(B,t) &= 0 \mbox{ only if for all $x\in B$ $x \not\in F$ or $f(x) > t$},\\
    C(B,t) &= 1 \mbox{ only if for all $x\in B$ $x \in F$ and $f(x) \le t$},\\
    C(B,t) &= \beta \mbox{ in any other or unknown case}.
\end{aligned}
\]
According to this for a given threshold $t$ we call the box $B$
\textbf{empty} if $C(B,t) = 0$,
\textbf{filled} if $C(B,t) = 1$ and \textbf{boundary} if $C(B,t) = \beta$.
For more insights on such a
procedure, see \cite{SamHaroud1996}: our empty, boundary and filled boxes
roughly correspond to the black, grey and white ones in \cite{SamHaroud1996}.
Some ideas on constructing the practical domain estimation procedure can be
found in \cite{SamHaroud1995} and we provide some research for a different
problem in the second part of the article. Note that any filled box lies
completely inside the feasible region.
Moreover, we emphasize that any actually empty or filled box in general
can be labeled as  boundary by our definition of the procedure, which is not the
case in \cite{SamHaroud1995},\cite{SamHaroud1996}.

Intuitively these notions represent the fact that when we seek for solutions
to (\ref{problem}) with the objective value of $t$ or better.
Thus, box $B$ is empty if there are no such points in it and $B$ is filled if
any point of $B$ constitute such a solution. The boundary boxes represent the
area of uncertainty of either the problem or the algorithm. The uncertainties
of the problem are the boxes with objective values on both sides
of threshold $t$ or both feasible and infeasible.
The algorithm uncertainties may come from the approximating techniques,
for example when precise domain estimation needs a lot of computational
resources and we consider some rough but faster algorithm. We state
some sufficient conditions on the choice of the algorithms to ensure the
convergence of the method.

To continue developing, assume a given parameter $S$ of the algorithm called
\textbf{domain scale}. This parameter controls the balance between domain 
regions sizes and objective values at iteration steps.
It seems that $S$ should be connected
to the Lipschitz constant of objective, but this question have not been
studied yet. As a rule of thumb, $S=0.6$ performed well enough for the design
centering problems tested in the second part of the work. The theoretical
results do not depend on the value of $S$ as well.

With all the definitions above the inverse graphical optimization
algorithm is defined as follows.

\noindent
\begin{minipage}{\textwidth}
\medskip
\hrule
\smallskip
\noindent
\begin{center}
    \large \textbf{The inverse graphical optimization algorithm}
\end{center}
\begin{enumerate}
    \item\label{alg_init}
        Let $\mathcal{X}$ be the set of active boxes in the algorithm,
        $\delta$ be the current accuracy, and $t$ be the current
        objective threshold for the problem. Assume that an initial value of
        $t$ satisfying 
        \[
            t > \min_F f(x)
        \]
        is somehow known from the problem domain, see notes below for details.
        Initialize $\mathcal{X} = \{X\}$ and $\delta = S\delta(X)$
        where $X\in\mathcal{B}$ is any box containing the feasible region 
        and $S$ is the domain scale parameter of the algorithm.
    \item\label{alg_bound}
        Find the tight approximate threshold for the problem as follows.
        \begin{enumerate}
            \item\label{alg_classify}
                Compute $C(X_i, t - \delta)$ for every $X_i\in\mathcal{X}$
                by applying the domain estimation procedure.
            \item If there exists no filled box, i.e.
                $C(X_i, t-\delta)\in\{0,\beta\}$ for all $X_i\in\mathcal{X}$,
                then the tight threshold is found and the algorithm proceeds
                to step~\ref{alg_test}.
            \item Otherwise, prune all the empty boxes from $\mathcal{X}$,
                i.e. replace $\mathcal{X}$ by
                \[
                    \left\{X_i\in\mathcal{X} \mid
                    C(X_i, t-\delta)\in\{\beta, 1\}\right\}.
                \]
            \item Replace $t$ by the new threshold $t-\delta$ and return to
                step~\ref{alg_classify}.
        \end{enumerate}
    \item\label{alg_test} Test for stop condition as follows.
        \begin{enumerate}
            \item\label{alg_test_comp}
                Compute the \emph{actual accuracy estimate},
                i.e. $\delta'$ such that
                \[
                    \delta' \ge t - \min_F f(x).
                \]
                See notes below for details.
            \item\label{alg_test_loop}
                If $\delta' \ge \varepsilon$ then a sufficient accuracy is
                not reached and the algorithm continues to step~\ref{alg_branch}.
            \item\label{alg_result}
                Otherwise, the accuracy of the solution is sufficient.
                So, we stop the algorithm returning $t$
                as the approximate solution value and take any
                point of the last filled box found at step~\ref{alg_bound} as the
                approximate solution point.

                In case there is no filled box found so far we split all the
                boundary active boxes
                similarly to step~\ref{alg_branch} and
                apply domain estimation procedure until a filled box is found.
        \end{enumerate}
\end{enumerate}
\end{minipage}

\noindent
\begin{minipage}{\textwidth}
\begin{enumerate}
    \setcounter{enumi}{3}
    \item\label{alg_branch}
        At this point target accuracy is not reached.
        So, split all the boxes in $\mathcal{X}$ into 2 parts along every coordinate
        axis. This produces $2^n$ boxes from every $X_i\in\mathcal{X}$ where $n$ is
        the domain dimensionality.
    \item\label{alg_iterate}
        Replace $\delta$ by $\frac{\delta}{2}$. Note that in accordance with
        previous step $\delta = S\delta(X_i)$ for any $X_i\in\mathcal{X}$ at
        every step of the algorithm. Return to step~\ref{alg_bound}.
\end{enumerate}
\smallskip
\hrule
\smallskip
\end{minipage}

To complete the algorithm we first discuss the accuracy estimation
at step~\ref{alg_test_comp} and then the choice of the initial upper
bound at step~\ref{alg_init}.

At step~\ref{alg_test_comp} one may use any problem-specific
algorithm. For example, the accuracy estimation may be derived
from the domain estimation procedure internals. One instance of
such a procedure is presented in section~\ref{app_section}.
For the general case we develop
the following algorithm using the domain estimation procedure only.
All the notions and details are the same as in the main algorithm.

\noindent
\begin{minipage}{\textwidth}
\medskip
\hrule
\smallskip
\noindent
\begin{center}
    \large \textbf{The accuracy estimation procedure}
\end{center}
\begin{enumerate}
    \item Let $\delta'$ be the current accuracy estimate.
        Initialize $\delta' = \delta$.
    \item\label{estimate_loop} Compute $C(X_i, t - \delta')$ by applying the
        domain estimation procedure.
    \item If there are non-empty boxes then replace $\delta'$ by
        $\delta' + \delta$ and return to step~\ref{estimate_loop}.
    \item Return $\delta'$ as the accuracy estimate.
\end{enumerate}
\smallskip
\hrule
\smallskip
\end{minipage}

Notice that
we only use the accuracy estimate $\delta'$ at step~\ref{alg_test_loop}
comparing it to $\varepsilon$.
So, there is no need in computing the correct
value of $\delta'$ in case of $\delta'\ge\varepsilon$, and one could skip
accuracy estimation steps by this condition.

Now we describe the choice of
the initial threshold at step~\ref{alg_init} of the inverse graphical
algorithm.
In most practical problems a reasonable value is known immediately, for example
see the design centering problem in section~\ref{app_section}. Beside that
one can use an objective value at any feasible point if it is easy to compute.
Nevertheless, it is possible to use the inverse method even without the means
of direct computing the objective values or the feasible points.
Assuming that only the domain estimation procedure is
available the following algorithm may be used to compute the initial threshold.
Formal details of this procedure are identical to the main algorithm above.

\noindent
\begin{minipage}{\textwidth}
\medskip
\hrule
\smallskip
\noindent
\begin{center}
    \large \textbf{The initial threshold search procedure}
\end{center}
\begin{enumerate}
    \item
        Initialize $t = 0$, $\mathcal{X} = \{X\}$,
        $\delta = S\delta(X)$.
    \item\label{init_estimate}
        Compute $C(X_i, t)$ for every $X_i\in\mathcal{X}$
        by applying the domain estimation procedure.
    \item
        If there exists a filled box in $\mathcal{X}$ then
        stop the algorithm returning $t$.
    \item\label{it_increase}
        Otherwise, replace $t$ by $t + \delta$.
    \item\label{it_loop}
        If all the boxes in $\mathcal{X}$ are empty than
        return to step~\ref{init_estimate}.
    \item\label{it_split}
        Otherwise, split every box $X_i\in\mathcal{X}$ into $2^n$ equal parts,
        replace $\delta$ by $2\delta$ and return to step~\ref{init_estimate}.
\end{enumerate}
\smallskip
\hrule
\smallskip
\end{minipage}

We prove later that these procedures always return correct result.
However, they will not necessarily finish in finite time for any
domain estimation algorithm and they may not be the best ones for a particular
application. We leave this part of study for the future
research as it highly depends on the properties of
a particular problem and domain estimation procedure.

\subsection{Formal study}
Let us proceed to the formal study of the inverse graphical optimization
algorithm. The main aim is to settle the convergence of the inverse graphical
method to the actual problem solution with theorems
\ref{optimum_theorem} and \ref{finish_theorem}.

Let us start with the proof of the algorithm overall correctnes.

\begin{lemma}
    \label{bound_lemma}
    Consider the inverse graphical optimization algorithm for the problem
    of form (\ref{problem}). The current threshold $t$ in the algorithm is
    always an upper bound for the problem, i.e.
    \begin{equation}
        \label{threshold_equation}
        t > \min_F f(x)
    \end{equation}
    at any step of the algorithm.
\end{lemma}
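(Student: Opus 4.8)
The plan is to argue by induction on the sequence of operations executed by the inverse graphical algorithm, exploiting that $t$ is reassigned in exactly one place and is only ever decreased.

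For the base case, at the initialisation step~\ref{alg_init} the value of $t$ is chosen, by the standing hypothesis, to satisfy $t > \min_F f(x)$, so (\ref{threshold_equation}) holds before anything else happens. Scanning the algorithm, $t$ is left untouched by steps~\ref{alg_test}, \ref{alg_branch} and~\ref{alg_iterate}; the only reassignment is the substitution $t \leftarrow t - \delta$ inside step~\ref{alg_bound}, and it is carried out only when the preceding test fails, i.e.\ only when $C(X_i, t-\delta) = 1$ for some active box $X_i \in \mathcal{X}$. Thus the inductive step reduces to the following claim: assuming $t > \min_F f(x)$ before this substitution, show that $t - \delta > \min_F f(x)$ after it.

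Here I would invoke the defining property of the domain estimation procedure: $C(X_i, t-\delta) = 1$ forces every $x \in X_i$ to be feasible with $f(x) \le t - \delta$. Every box the algorithm handles is non-degenerate — each descends from the initial box $X$, which contains an interior point of $F$ and is therefore full-dimensional, by finitely many coordinatewise bisections in step~\ref{alg_branch} — so $X_i$ is in particular a non-empty set, and for any $x_0 \in X_i$ we get $\min_F f(x) \le f(x_0) \le t - \delta$. I would also record the routine side fact, needed to keep the induction meaningful, that $\mathcal{X}$ never empties: the witnessing box $X_i$ is a filled box and is therefore retained by the pruning performed in step~\ref{alg_bound}.

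The main obstacle, and the only place where care is really needed, is promoting $\min_F f(x) \le t - \delta$ to the strict inequality demanded by (\ref{threshold_equation}). Had the ``filled'' case been defined with a strict bound $f(x) < t$, this would be immediate; with the stated non-strict bound it remains to exclude the borderline configuration $\min_F f(x) = t - \delta$, in which $f$ would have to equal the constrained minimum on the whole full-dimensional box $X_i$. I would treat this by taking $x_0 \in \sint(X_i) \subset \sint(F)$ and combining $f(x_0) \le t - \delta$ with the inductive hypothesis $\min_F f(x) < t$; ruling out (or otherwise absorbing) this degenerate case is the one subtle point, and it is where I expect the author's proof to concentrate its effort.
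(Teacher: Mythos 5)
Your argument follows the paper's proof almost verbatim: the base case holds by the choice of the initial threshold, $t$ is reassigned only inside step~\ref{alg_bound}, and the reassignment $t \leftarrow t-\delta$ is guarded by the existence of a filled box, which supplies a nonempty set of feasible witnesses $x_0$ with $f(x_0)$ bounded by $t-\delta$. The ``subtle point'' you isolate at the end is indeed the only delicate spot, but be aware that the paper spends no effort there: its proof simply writes $t' > f(x) \ge \min_F f(x)$, silently treating the filled condition as if it were strict, whereas the stated definition of $C(B,t)=1$ only guarantees $f(x)\le t$. Your instinct that this needs attention is sound; your proposed repair, however, does not close the gap. Taking $x_0\in\sint(X_i)\subset\sint(F)$ and invoking the inductive hypothesis $\min_F f(x) < t$ yields nothing beyond $\min_F f(x)\le t-\delta < t$: if $f$ attains its minimum on a full-dimensional subset of $\sint(F)$ and the running threshold happens to satisfy $t-\delta=\min_F f(x)$ exactly, a box of minimizers may legitimately be reported as filled and the update produces $t=\min_F f(x)$, contradicting the strict inequality (\ref{threshold_equation}). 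So, taken literally, the lemma admits a degenerate but legitimate failure configuration, and neither your sketch nor the paper's proof excludes it. The honest fixes are either to weaken (\ref{threshold_equation}) to $t\ge\min_F f(x)$, which is all that the downstream results (theorem~\ref{optimum_theorem} and corollary~\ref{decreasing_corollary}) actually require, or to define ``filled'' with the strict bound $f(x)<t$. In short: same route as the author, and the one issue you correctly flagged but could not resolve is a defect of the paper's own proof rather than a missing idea on your side.
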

\begin{proof}
    At the beginning of the algorithm (\ref{threshold_equation}) holds
    by definition.
    After that $t$ is only changed at step~\ref{alg_bound} of the algorithm.
    Note that we replace $t$ by $t' = t - \delta$ only if there are filled boxes
    for $t'$, i.e. there exist at least one point $x$ such that $x\in F$ and
    \[
        t' > f(x) \ge \min_F f(x).
    \]
    So, (\ref{threshold_equation}) holds at any step of the algorithm.
\end{proof}

This immediately leads us to the first convergence theorem.

\begin{theorem}
    \label{optimum_theorem}
    Consider the inverse graphical optimization algorithm for the problem
    of form (\ref{problem}).
    Then the following holds for any target accuracy $\varepsilon$ if the
    algorithm actually finishes:
    \begin{enumerate}
        \item The approximate solution value $f_\varepsilon$ returned by
            the algorithm is a global
            $\varepsilon$-op\-ti\-mal solution \cite{Tuy2016}, i.e.
            \[
                \left|f_\varepsilon - \min_F f(x) \right| \le \varepsilon.
            \]
            Note that this implies
            \[
                f_\varepsilon\to \min_F f(x)
            \]
            as $\varepsilon\to 0$.
        \item The sequence of the approximate solution points returned
            by the algorithm is feasible,
            has an accumulation point and every accumulation point $x'$ of this
            sequence is the solution point for the problem (\ref{problem}),
            i.e. $x'\in F$ and
            \[
                f(x') = \min_F f(x).
            \]
    \end{enumerate}
\end{theorem}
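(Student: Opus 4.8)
The plan is to establish the two claims separately, both leaning on Lemma~\ref{bound_lemma} and on the stop condition at step~\ref{alg_test}. For the first claim, note that when the algorithm terminates it returns $f_\varepsilon = t$ for the current threshold $t$, and termination happens precisely when the accuracy estimate satisfies $\delta' < \varepsilon$. By the defining property of $\delta'$ at step~\ref{alg_test_comp} we have $t - \min_F f(x) \le \delta' < \varepsilon$. Combining this with Lemma~\ref{bound_lemma}, which gives $t > \min_F f(x)$, i.e. $t - \min_F f(x) > 0$, we obtain $0 < t - \min_F f(x) < \varepsilon$, hence $|f_\varepsilon - \min_F f(x)| \le \varepsilon$. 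The limit statement $f_\varepsilon \to \min_F f(x)$ as $\varepsilon \to 0$ is then immediate by squeezing.

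For the second claim, I would first argue feasibility: the returned point is taken from a filled box found at step~\ref{alg_bound} (or, in the degenerate case of step~\ref{alg_result}, from a filled box produced by extra subdivision), and by the definition of the domain estimation procedure $C$, every point of a filled box lies in $F$. So each returned point $x_k$ (indexed, say, by a decreasing sequence $\varepsilon_k \to 0$) satisfies $x_k \in F$. Since $F \subset X$ is compact (closed and bounded), the sequence $(x_k)$ has an accumulation point $x' \in F$ because $F$ is closed.

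It remains to show $f(x') = \min_F f(x)$ for any accumulation point $x'$. Here the key observation is that, for the run with target accuracy $\varepsilon_k$, the returned point $x_k$ lies in a filled box for threshold $t_k$, so $f(x_k) \le t_k$; on the other hand $t_k > \min_F f(x)$ by Lemma~\ref{bound_lemma}, and by the stop condition $t_k - \min_F f(x) \le \delta'_k < \varepsilon_k$. Therefore $\min_F f(x) < t_k \le \min_F f(x) + \varepsilon_k$, and combining with $f(x_k) \le t_k$ together with the trivial bound $f(x_k) \ge \min_F f(x)$ (as $x_k \in F$) we get $|f(x_k) - \min_F f(x)| < \varepsilon_k \to 0$. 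Passing to the subsequence converging to $x'$ and using continuity of $f$, we conclude $f(x') = \min_F f(x)$.

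The main obstacle I anticipate is the bookkeeping around what exactly the "sequence of approximate solution points" is and in which filled box $x_k$ actually resides — one must be careful that the filled box used is the one for the final threshold $t_k$ (so that $f(x_k) \le t_k$ genuinely holds), and that the fallback subdivision in step~\ref{alg_result} does not break this, since that extra subdivision is performed at the same threshold $t_k - \delta$ region of the search. Everything else is a routine application of compactness of $F$, continuity of $f$, and the squeeze bounds coming from Lemma~\ref{bound_lemma} and the accuracy estimate; no part of the argument depends on the domain scale $S$, consistent with the remark in the text.
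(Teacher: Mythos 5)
Your proposal is correct and follows essentially the same route as the paper's own proof: part~1 combines Lemma~\ref{bound_lemma} with the stop condition $t - \min_F f(x) \le \delta' < \varepsilon$, and part~2 uses membership of the returned point in a filled box for the final threshold, compactness of $F$, and the squeeze $\min_F f(x) \le f(x_\varepsilon) \le f_\varepsilon < \min_F f(x) + \varepsilon$. The only cosmetic difference is that you invoke continuity of $f$ explicitly when passing to the accumulation point, which the paper leaves implicit.
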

\begin{proof}
    First notice that the algorithm can stop only at step~\ref{alg_result}
    and only when
    \[
        t - \min_F f(x) \le \delta' < \varepsilon.
    \]
    Moreover, by lemma~\ref{bound_lemma}
    \[
        t > \min_F f(x)
    \]
    at any step of the algorithm.
    Therefore, for $f_\varepsilon = t$ returned as the solution we have
    \[
        \left|f_\varepsilon - \min_F f(x)\right| < \varepsilon.
    \]
    This completes the first statement of the theorem.
    In addition, notice that any solution point
    $x_\varepsilon$ returned at step~\ref{alg_result} lies inside the filled
    box for the corresponding threshold, so $x_\varepsilon\in F$ and
    \[
        \min_F f(x) \le f(x_\varepsilon) \le f_\varepsilon <
        \min_F f(x) + \varepsilon.
    \]
    Moreover, the feasible region $F$ is a compact set by the problem
    definition. Consequently the infinite sequence of
    \[
        \{x_\varepsilon \mid \varepsilon\to 0\}
    \]
    has a feasible accumulation point $x'$ and for every such point we have
    \[
        f(x') = \min_F f(x),
    \]
    which is essentially the second statement of the theorem.
\end{proof}

To complete the convergence theory it remains to show that 
the inverse graphical algorithm stops in finite time. This part requires
some restrictions on estimation procedures.

First, recall that a domain estimation procedure is allowed by
definition to label any empty or filled box as boundary. So, the trivial example
of such a procedure maps any box to the boundary value $\beta$. Obviously,
such a procedure does not account the problem at all and thus cannot lead to a
solution. This argument motivates a restriction on domain estimation procedures.
We introduce it by the following definition.

\begin{definition}
    \label{approx_def}
    Given the problem of form (\ref{problem}) and a domain estimation
    procedure $C$ we call the procedure $C$ \textbf{problem approximating} if
    for every $x\in\sint(F)$ and $t\in\mathbb{R}$ whenever $f(x) < t$
    there exists $\delta > 0$ such that
    \[
        C(B, t) = 1
    \]
    for any $B\in\mathcal{B}$ whenever $x\in B$ and $\delta(B) < \delta$.
\end{definition}

Intuitively, a problem approximating domain estimation procedure
recognize the filled boxes precisely
when the box sizes becomes sufficiently small.
Note that even for the absolutely precise domain estimation procedure every box
containing a point $x\in\partial F$ is boundary. Consequently,
when the point tends to the feasible region boundary the corresponding box
size $\delta$ needed for precise labeling tends to 0. This explains
the definition based on interior feasible points.

Now we need some restriction on the actual accuracy estimate at
step~\ref{alg_test_comp} of the algorithm. Notice that one can use too big
actual accuracy estimate with $\delta' > \varepsilon$ holding always.
Such accuracy estimate results
in the algorithm running infinitely, so we introduce the following definition
to avoid this.

\begin{definition}
    \label{convergent_definition}
    Consider the inverse graphical optimization algorithm for the problem
    of form (\ref{problem}).
    We say that the actual accuracy estimation procedure is
    \textbf{convergent} if for any $\varepsilon > 0$ there exist such
    $\delta(\varepsilon)$ and $\theta(\varepsilon)$ that
    the obtained accuracy estimate $\delta' > 0$ satisfies
    \[
        \delta' < \varepsilon
    \]
    whenever the algorithm does not finish before and reaches
    $\delta < \delta(\varepsilon)$ along with
    \[
        t < \min_F f(x) + \theta(\varepsilon).
    \]
\end{definition}

This definition means that the computed estimate of the actual accuracy
tends to 0 if the current accuracy in the algorithm tends to 0 and
the current thresholds tends to the actual optimal solution.

Having all the definitions we complete the inverse graphical algorithm
convergency theory with the following theorem.

\begin{theorem}
    \label{finish_theorem}
    Consider the inverse graphical optimization algorithm for the problem
    of form (\ref{problem}).
    Suppose the domain estimation procedure
    is problem approximating and the accuracy estimation procedure is
    convergent. Then the algorithm finishes after finite number of steps
    for any target accuracy $\varepsilon$.
\end{theorem}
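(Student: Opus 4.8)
The plan is to prove termination in three parts: (i) each single invocation of the threshold-tightening loop, step~\ref{alg_bound}, is finite; (ii) over the outer loop the current threshold $t$ is driven down to $\min_F f$; (iii) together with convergence of the accuracy estimate this forces the stopping test, step~\ref{alg_test_loop}, to succeed after finitely many outer iterations. Part (i) is immediate: inside step~\ref{alg_bound} the threshold is lowered to $t-\delta$ only when a filled box for $t-\delta$ is present, and since every active box is a nondegenerate hyperrectangle a filled box for a threshold $s$ contains a feasible point with objective $\le s$, so $s\ge\min_F f$; hence the loop must stop once $t-\delta<\min_F f$, and because $t$ decreases by the fixed amount $\delta>0$ at each pass it stops after finitely many passes. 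Likewise, by step~\ref{alg_iterate} the accuracy halves each outer iteration, so that in iteration $k$ the accuracy is $\delta_k:=S\,\delta(X)/2^{k}$, with $\delta_k\to 0$, and the active boxes shrink to diameter $\delta(X)/2^{k}\to 0$, while the convergence hypothesis guarantees the accuracy estimation at step~\ref{alg_test_comp} returns a finite value.

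Part (ii) is the heart of the matter. The threshold is altered only in step~\ref{alg_bound} and only downward, and by Lemma~\ref{bound_lemma} it stays above $\min_F f$, so the values $t_k$ attained at the end of step~\ref{alg_bound} in iteration $k$ form a nonincreasing sequence converging to some $t^\star\ge\min_F f$; I claim $t^\star=\min_F f$. If not, then since $F$ is a closed region with nonempty interior we have $\min_F f=\inf_{\sint(F)}f<t^\star$, so pick $x^\star\in\sint(F)$ with $f(x^\star)<t^\star$ and put $\tau:=\tfrac12(f(x^\star)+t^\star)$, so that $f(x^\star)<\tau<t^\star$. A box containing a feasible point whose objective is below the current pruning threshold is never declared empty, and every pruning threshold in the run is at least $t^\star>f(x^\star)$ (pruning in iteration $k$ occurs only at thresholds down to $t_k\ge t^\star$); hence the box around $x^\star$ is never pruned, and the subdivision in step~\ref{alg_branch} always leaves an active box $B_k^\star\ni x^\star$. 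The problem-approximating property applied to $x^\star$ and $\tau$ yields $\bar\delta>0$ with $C(B,\tau)=1$ for every box $B\ni x^\star$ of diameter below $\bar\delta$, hence $C(B_k^\star,\tau)=1$ as soon as $\delta(X)/2^{k}<\bar\delta$; by monotonicity of $C$ in its threshold argument this gives $C(B_k^\star,s)=1$ for all $s\ge\tau$. But step~\ref{alg_bound} in iteration $k$ stops at the threshold $s_k:=t_k-\delta_k$ precisely because no filled box was found there, while $s_k\to t^\star>\tau$, so $s_k>\tau$ for all large $k$ and then $C(B_k^\star,s_k)=1$ is a contradiction. Therefore $t^\star=\min_F f$ and $t_k\to\min_F f$.

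Part (iii) concludes. Fix a target accuracy $\varepsilon>0$ and take $\delta(\varepsilon),\theta(\varepsilon)$ from the convergence hypothesis. By part (ii) and $\delta_k\to 0$ there is an outer iteration $k$ with both $\delta_k<\delta(\varepsilon)$ and $t_k<\min_F f+\theta(\varepsilon)$; if the algorithm has not already stopped, then at step~\ref{alg_test_comp} of that iteration the estimate satisfies $\delta'<\varepsilon$, so step~\ref{alg_test_loop} passes control to step~\ref{alg_result} and the algorithm stops. The optional post-processing in step~\ref{alg_result}, which splits boundary boxes until a filled box appears, also terminates, again by the problem-approximating property applied to an interior point with objective below the current threshold. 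Since every earlier phase is finite, the algorithm runs for finitely many steps.

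The hard part is the limiting argument in part (ii): the $\delta$ produced by the problem-approximating property depends on the threshold, whereas the thresholds $s_k$ the algorithm actually tests form a moving sequence converging to $t^\star$. The argument above handles this by freezing a single threshold $\tau<t^\star$ and transporting $C(B_k^\star,\tau)=1$ to $C(B_k^\star,s_k)=1$ through monotonicity of $C$ in the threshold --- a property that is natural for any sensible domain estimation procedure but is not part of the stated definition, so a proof unwilling to use it would instead need a uniform lower bound on the resolution $\bar\delta$ in a neighborhood of $t^\star$. A secondary point deserving care is the identity $\min_F f=\inf_{\sint(F)}f$, since the problem-approximating property is stated only for interior points yet must reach the actual optimum.
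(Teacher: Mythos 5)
Your proof is correct and follows essentially the same route as the paper: the paper's Corollary~\ref{decreasing_corollary} establishes your parts (i)--(ii) (via Lemma~\ref{filled_lemma} applied at the fixed threshold $t' = \min_F f + \theta/2$, the direct analogue of your frozen $\tau$), its Corollary~\ref{result_corollary} handles termination of step~\ref{alg_result}, and the final assembly with Definition~\ref{convergent_definition} matches your part (iii). The monotonicity of $C$ in the threshold that you flag as an unstated assumption is also used, silently, in the paper's own argument when it passes from a box filled for $t'$ to the claim that step~\ref{alg_bound} keeps iterating while $t - \delta \ge t'$, so you have not introduced a gap that the paper avoids.
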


The proof of theorem~\ref{finish_theorem} is given in appendix
section~\ref{proofs_section}.
To finalize the theoretical results we provide the basic correctness statements
for the supporting routines of the inverse graphical algorithm.

\begin{theorem}
    \label{accuracy_theorem}
    Consider the inverse graphical optimization algorithm for the problem
    of form (\ref{problem}) and the accuracy estimation procedure described
    in section~\ref{algorithm_subsection}. If the accuracy estimation
    procedure finishes with value $\delta'$ then $\delta'$
    is the actual accuracy estimate, i.e.
    \[
        \delta' \ge t - \min_F f(x).
    \]
\end{theorem}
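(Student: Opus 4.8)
The plan is to show that when the accuracy estimation procedure terminates returning $\delta'$, the final value of $\delta'$ has passed the termination test, meaning all boxes $X_i$ were classified as empty for threshold $t - \delta'$. I would first recall what "empty" means: $C(X_i, t-\delta') = 0$ implies for every $x \in X_i$ either $x \notin F$ or $f(x) > t - \delta'$. Since the active boxes $\mathcal{X}$ at this point in the main algorithm cover the feasible region $F$ — this is an invariant maintained by the pruning at step~\ref{alg_bound}, which only discards boxes certified empty and thus containing no point $x$ with $x \in F$ and $f(x) \le t-\delta$ — every feasible point lies in some $X_i$.

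Next I would combine these two facts. Take the minimizer $x^* \in F$ with $f(x^*) = \min_F f(x)$; it exists because $F$ is compact and $f$ continuous. Then $x^*$ lies in some active box $X_j$, and since that box is empty for threshold $t - \delta'$, we must have $x^* \notin F$ or $f(x^*) > t - \delta'$. The first alternative is impossible, so $f(x^*) > t - \delta'$, i.e. $\min_F f(x) > t - \delta'$, which rearranges to $\delta' > t - \min_F f(x)$. This gives the claimed inequality (in fact with strict inequality, which is stronger than the $\ge$ stated).

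The one point requiring care is the covering invariant: I should verify that at every point where the accuracy estimation procedure is invoked (step~\ref{alg_test_comp}), the active set $\mathcal{X}$ still covers $F$. Initially $\mathcal{X} = \{X\}$ and $F \subset X$ by construction. The only operations on $\mathcal{X}$ are splitting a box into $2^n$ subboxes (step~\ref{alg_branch}), which preserves the union, and pruning empty boxes at step~\ref{alg_bound}. A pruned box $X_i$ satisfies $C(X_i, t-\delta) = 0$, so no point of $X_i$ is simultaneously in $F$ and has objective value $\le t-\delta$; but by Lemma~\ref{bound_lemma} the minimizer $x^*$ satisfies $f(x^*) = \min_F f(x) < t$, and a brief argument (the current threshold at the time of pruning is exactly $t-\delta$ after the replacement, and $x^*$ is feasible with $f(x^*) < t - \delta$ would be needed) shows $x^*$ is never pruned; more robustly, any feasible point that is ever a candidate minimizer survives, so $F \cap \bigcup \mathcal{X}$ always contains $x^*$. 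The main obstacle is stating this invariant cleanly enough that the application to $x^*$ is unambiguous; once that is done, the rest is the two-line deduction above.
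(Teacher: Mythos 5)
Your proposal is correct and follows essentially the same route as the paper's own proof: termination of the accuracy estimation procedure means every active box is empty for threshold $t-\delta'$, and applying the definition of emptiness to the minimizer $x_{\min}$ (which exists by compactness and continuity) yields $\delta' > t - \min_F f(x)$. The only difference is that you make explicit the covering invariant guaranteeing the minimizer lies in some active box, which the paper establishes separately as Corollary~\ref{feasible_corollary} and leaves implicit in this particular proof.
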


\begin{theorem}
    \label{initial_theorem}
    Consider the inverse graphical optimization algorithm for the problem
    of form (\ref{problem}) and the initial threshold search procedure
    described in section~\ref{algorithm_subsection}. If this procedure
    finishes with value $t$ then $t$ is an upper bound for the problem,
    i.e.
    \[
        t > \min_F f(x).
    \]
    Moreover, if the domain estimation procedure is problem approximating,
    then the initial threshold search procedure finishes in finite number
    of steps.
\end{theorem}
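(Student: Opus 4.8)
The plan is to prove the two assertions of Theorem~\ref{initial_theorem} separately, since they call for quite different ingredients.

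\emph{The returned value is an upper bound.} The initial threshold search procedure terminates only at its third step, and only when some $X_i\in\mathcal{X}$ satisfies $C(X_i,t)=1$; by the defining property of a domain estimation procedure this means $X_i\subseteq F$ and $f(x)\le t$ for all $x\in X_i$. The box $X_i$ is nonempty, so taking any $x\in X_i$ we get $x\in F$ and $\min_F f\le f(x)\le t$, whence $t>\min_F f(x)$ exactly as in Lemma~\ref{bound_lemma}. This part needs no hypothesis on $C$ at all, so it handles the first claim immediately.

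\emph{Halting when $C$ is problem approximating.} Fix an interior feasible point $x^*\in\sint(F)$ (it exists by the standing assumption on $F$) and put $c:=f(x^*)\in\mathbb{R}$, finite because $f$ is continuous on the compact $X$. Assume toward a contradiction that the procedure runs forever, so it never reaches its stopping step and no box is ever labelled filled. The parameter $\delta$ starts at $S\delta(X)>0$ and is only ever doubled, hence $\delta\ge S\delta(X)$ throughout; since each iteration adds the current $\delta$ to $t$, we get $t\to+\infty$, so eventually $t>c$ and it stays so. Once $t>c$, the box $B\in\mathcal{X}$ containing $x^*$ (there always is one, since $\mathcal{X}$ keeps covering $X$ and is never pruned here) cannot be empty — the condition behind $C(B,t)=0$ would require $x^*\notin F$ or $f(x^*)>t$, both false — and it is not filled by assumption, hence it is a boundary box. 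So the ``all boxes empty'' test fails, the splitting step runs, and all box diameters are halved; consequently every subsequent iteration splits, so the common box diameter of $\mathcal{X}$ tends to $0$.

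\emph{The contradiction, and the expected obstacle.} Definition~\ref{approx_def} attaches to each threshold $\tau>c$ a number $\bar\delta(\tau)>0$ such that $C(B,\tau)=1$ whenever $x^*\in B$ and $\delta(B)<\bar\delta(\tau)$. To finish, we need an iteration whose current threshold $t>c$ and box $B\ni x^*$ satisfy $\delta(B)<\bar\delta(t)$; then $B$ is filled, contradicting the non-halting assumption, and the procedure terminates. I expect this matching to be the hard part: the procedure's threshold grows — geometrically, once splitting begins, since $\delta$ doubles — while the resolution $\delta(B)$ shrinks, so one must make sure the shrinking really outruns the growth of the threshold. I would argue this by committing to a single $\bar t>c$ in advance, obtaining $\bar\delta(\bar t)$ from Definition~\ref{approx_def}, and noting that since $\delta(B)\to0$ and $t\to+\infty$ there is an iteration with $t\ge\bar t$ and $\delta(B)<\bar\delta(\bar t)$; the only thing to verify carefully is that a box certified filled at the level $\bar t$ is still recognised as feasible for the larger threshold actually in force at that iteration (a natural monotonicity-in-$t$ property of $C$). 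Granting that, the proof closes; the remaining parts are routine bookkeeping of the procedure's control flow.
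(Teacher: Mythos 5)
Your proof of the first claim is the paper's own argument: the procedure can only return $t$ after finding a filled box, that box is nonempty because boxes are only ever split into $2^n$ nonempty parts, and any of its points witnesses the bound. (One shared quibble: the definition of a filled box only gives $f(x)\le t$, so both you and the paper really obtain $t\ge\min_F f$ rather than the strict inequality in the statement.) Your treatment of termination also follows the paper's route exactly: $t$ grows by at least the initial $\delta=S\delta(X)>0$ on every pass through step~\ref{it_increase}, so eventually $t>f(x^*)$ for an interior feasible $x^*$; from then on the box containing $x^*$ cannot be empty, hence step~\ref{it_split} fires at every iteration, the box diameters tend to $0$, and one wants Definition~\ref{approx_def} to eventually force a filled box.

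The difficulty you flag in your last paragraph is genuine, and the patch you propose is not available from the paper's definitions. The $\bar\delta(\tau)$ supplied by Definition~\ref{approx_def} depends on the threshold $\tau$, and here the threshold is a moving target: once splitting begins, $\delta$ doubles each iteration, so $t$ grows geometrically while the box diameter merely halves, and nothing prevents $\bar\delta(\tau)$ from decaying in $\tau$ faster than the boxes shrink. The monotonicity you invoke to repair this --- that a box filled at level $\bar t$ stays filled at every larger threshold --- does not follow from the definition of a domain estimation procedure: all three clauses there are of the form ``$C(B,t)=v$ \emph{only if} $\dots$'', and the paper explicitly allows $C$ to label any actually filled box as boundary, so $C$ may return $\beta$ at a larger threshold on a box it labelled $1$ at a smaller one. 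As written, your final step is therefore unjustified. You should know that the paper's own proof makes exactly the same leap without comment (``the boxes become arbitrarily small and the problem approximating domain estimation procedure eventually produce a filled box''), so you have not missed an ingredient the paper supplies; rather, you have correctly isolated the one place where an extra hypothesis --- monotonicity of $C$ in $t$, or a choice of $\delta$ in Definition~\ref{approx_def} uniform over all thresholds above some $\bar t>f(x^*)$ --- is actually needed to make the theorem's termination claim airtight.
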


The proofs can be found in appendix section~\ref{proofs_section}.
Combining these results with theorem~\ref{optimum_theorem} we deduce, that
even when utilizing solely the domain estimation procedure the inverse graphical
algorithm always produces $\varepsilon$-optimal solution whenever it finishes
in finite time. For the algorithm to
stop inevitably it only remains to show that the accuracy estimation procedure
is convergent for a particular problem. As was stated above, we leave this to
the research of applications.

%===APPLICATION=================================================================
\section{Solving the design centering problem}
\label{app_section}
In this section we study an application of the general framework developed above
to a special design centering problem.
First, the problem statement and a brief review of
the previous work is provided. Then we discuss the implementation details
of the state-of-the-art methods along with the inverse approach.
At the end the practical comparison of the algorithms is presented.

Let us start with the problem description. We consider the design centering
problem arising in the field of diamond manufacturing
\cite{Nguyen1992}, \cite{Horst1996}. Precisely, in the diamond industry
an important problem is to cut the largest diamond of a prescribed shape from
a given rough stone. Following \cite{Nguyen1992} we limit the allowed
transformations to translation and scaling,
so the formal problem stament is as follows.
Let $Q\subset\mathbb{R}^n$ be a nonempty compact set. In addition,
let $K\subset\mathbb{R}^n$ be a nonempty compact \textbf{star-shaped}
set. The latter means that the whole boundary $\partial K$ is visible
from a single interior point. Without loss of generality we assume this
point to be origin $0\in\mathbb{R}^n$. Under these assumptions the
problem is to find \textbf{center} $x\in\mathbb{R}$ and \textbf{radius}
$r\ge 0$ solving the mathematical program
\begin{equation}
    \label{orig_dc_problem}
    \begin{gathered}
        r \to \max \\
        \mbox{s.t.}\; x + r K\subset Q,
    \end{gathered}
\end{equation}
see Fig. \ref{dc_figure}.

\begin{figure}[h]
    \begin{center}
        \includegraphics[width=0.6\textwidth]{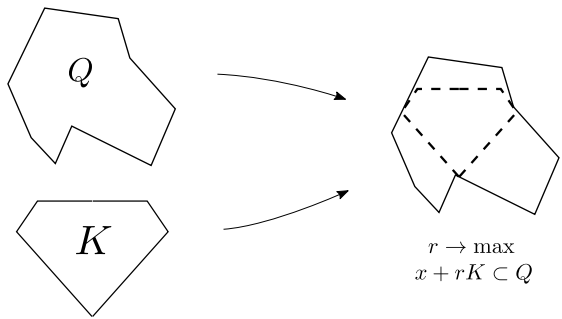}
    \end{center}
    \caption{Design centering problem}
    \label{dc_figure}
\end{figure}

For convenience we use the following equivalent statement when needed:
\begin{equation}
    \label{dc_problem}
    \begin{gathered}
        r(x) = \max\{r\ge 0 \mid x+rK\subset Q\} \to \max\\
        \mbox{s.t}\; x\in Q.
    \end{gathered}
\end{equation}
In the rest of the article we call $Q$ a \textbf{contour} and
$K$ a \textbf{pattern} in the sense of searching for a set similar to $K$
inside $Q$. In addition, we call function $r(x)$ defined above a
\textbf{radius value}.
Note that we depend on a weaker star-shapeness property of $K$ as opposed
to the convexity in \cite{Nguyen1992} and \cite{Horst1996}. The practical
importance of this is obvious when considering the widely
used heart diamond cut which is essentially star-shaped and non-convex,
see Fig.~\ref{patterns_figure}~(b).
In addition,
we assume when needed through this section that $Q$ and $K$ are polytopes.
In practice, it is always the case because the real stone models
are represented and visualized by the computers as the sets of simplex
faces constituting the polytope boundary. Accordingly, by polytope
we denote the compact connected region bounded by a finite set of
$n-1$ simplices.
Note that the diamond industry deals with 3-dimensional bodies only,
so we study this case primarily. However, most of the research remains valid
for higher dimensions as well.

Let us continue with a review of the previous work related to the problem.
In \cite{Nguyen1992} the problem is reduced to
a difference of convex programming problem. It is assumed there that
$K$ is convex and $Q$ is an intersection of a convex set with a finite
number of complementary convex sets. In that case the problem can be solved
by the
combination of the outer approximation and successive partition methods
\cite{Thoai1988}. Note that it is essential for these algorithms that
$Q$ and $K$ satisfy the convexity requirements. As a consequence, such
approaches are not directly applicable to a problem we consider
in this article. Nevertheless, a useful algorithm computing the radius value
for problem (\ref{dc_problem}) was developed in \cite{Nguyen1992}.

The related study for a general design centering problem is provided in
\cite{Thach1988}. It is proved there that the problem is globally Lipschitzian
whenever $K$ is convex. Therefore, in addition to the scheme of
\cite{Thoai1988} any method of Lipschitzian global optimization can be used.
Moreover, in practice contour
$Q$ is usually given by a set of simplex faces, i.~e. without a direct
difference of convex decomposition. In such a case using a general Lipschitzian
method may be the only possible approach \cite{Thach1988}.
In particulal,
the state of the art branch-and-bound framework can be easily applied to this
kind of problem \cite{Scholz2012}. So, the design centering problem under
consideration is a good candidate to test the straight and the inverse
solution methods simultaneously.
In \cite{Horst1995} a performance research is provided for a variety of
branch-and-bound based methods, we use it to choose a good technique to
compare to. In addition, we compare the inverse method to the widely used
and recent Lipschitzian optimization techniques \cite{Jones1993},
\cite{Gablonsky2001}, \cite{Malherbe2017}. Note that we do not involve into
comparison advanced bisection methods \cite{Abaffy2018},\cite{Shary2001}.
The reason is high computational complexity of solving equations
$r(x) = c$ for objective functional in (\ref{dc_problem}) resulting from
nonsmoothness of $r(x)$. However, it is still worth looking at various
equation solving techniques, we leave this for future research.

The supporting algorithms described below for the inverse
approach are based on the
Min\-kow\-ski sum concept and voxelization techniques. Let us note the related
works. An application of the Minkowski sums to the geometric placement
problems related to the one under consideration can be found in
\cite{Pustylnik2007}. However, it deals with the patterns of fixed size only,
i.e. variable radius in problem (\ref{orig_dc_problem}) is not considered.
As for the Minkowski sums computation, the good algorithms are developed in
\cite{Sacks2013}, \cite{Varadhan2006}, \cite{Li2011}. In our appproach the
complete Minkowski sum computation can be omitted, so we use a custom
technique inspired by the spatial tree labelling \cite{SamHaroud1995} and based
on the general convex decomposition \cite{Varadhan2006}
along with the volumetric geometry representation \cite{Kaufman1994}.

In addition to the above, the design centering problem arising from diamond
industry can be treated as the geometric placement problem. Examples of the
related research can be found in
\cite{Deits2014,Chebrolu2008,Denny2003,Winterfeld2008}.
However, all the techniques discovered by the author so far
either are limited to 2-dimensional case only or depend on the convexity of the
pattern. In this article we extend the class of problems to solve by allowing
non-convex star-shaped patterns in $\mathbb{R}^3$ and the core results
can be applied to the problems of an arbitrary number of dimensions.

\subsection{Implementation of conventional methods}
\label{classic_impl_section}
Let us describe the application of general optimization schemes
to problem (\ref{dc_problem}). First of all we need to compute
the radius value at any given point, which is done by means of
\cite{Nguyen1992}. In addition, we have to
ensure that the points selected actually lie inside
the contour. This is done by the well-known ray-casting algorithm
assuming the contour to be a polytope.

Now consider application of the branch-and-bound technique.
Note that we study a maximization
problem, so the upper and lower bounds may be the opposite to the ones
in references.
The general branch-and-bound scheme \cite{Horst1996} depends on three
operations: bounding, selection and refining. Various selection and
refining procedures may be used in the inverse method as well as in
branch-and-bound ones. For the sake of simplicity we
have used the partition into boxes and selection of all the active boxes
in the inverse framework above, so we do for the branch-and-bound method
implementation. The main question remains is chosing the bounding operations.

The common way to determine a lower
bound is computing the objective value at some point inside the region
\cite{Scholz2012}, it has been discussed already.
As for the upper bounds, notice that
the radius value depending on center $x$ in problem (\ref{dc_problem})
is a Lipschitz function. For the case of convex pattern
it was proved in \cite{Thach1988}. We extend this result to the star-shaped
patterns in order to compare the conventional and new approaches.

\begin{theorem}
    \label{lipschitz_theorem}
    Consider the problem of form (\ref{dc_problem}). Suppose the pattern
    is a star-shaped polytope and no pattern face is coplanar with pattern
    origin $0$. Then radius value $r(x)$ is a Lipschitz function with constant
    \[
        L = \frac{1}{\Delta},
    \]
    where $\Delta$ is the minimum distance from origin $0$ to hyperplanes
    containing pattern faces.
\end{theorem}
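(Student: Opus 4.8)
The plan is to reduce the statement to a purely geometric inclusion about the pattern $K$, and then to verify that inclusion by means of a suitable ``radial gauge'' of $K$.

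First, observe that $r(x)=\max\{r\ge 0\mid x+rK\subset Q\}$ is well defined: the admissible set of radii is closed (since $Q$ is closed and $K$ compact), nonempty (for $x\in Q$ the radius $0$ works), and bounded (since $0\in\sint K$ forces $rK$ to have volume $r^n\operatorname{vol}K$, which cannot fit into the bounded set $Q$ for large $r$). To get the Lipschitz estimate it suffices, by symmetry in $x$ and $x'$, to prove $r(x')\ge r(x)-L\|x-x'\|$. Writing $r=r(x)$ and $r'=\max\{0,\,r-L\|x-x'\|\}$, I would show $x'+r'K\subset x+rK\subset Q$, which after translating by $-x$ and scaling by $1/r$ is exactly the inclusion
\[
    w+\lambda K\subset K,\qquad \lambda=\tfrac{r'}{r}\in[0,1],\quad w=\tfrac{x'-x}{r},\ \ \|w\|=(1-\lambda)\Delta .
\]
Hence the whole theorem follows from the geometric lemma: if $\lambda\in[0,1]$ and $\|w\|\le(1-\lambda)\Delta$ then $w+\lambda K\subset K$ (the degenerate cases $r(x)=0$ and $r'=0$ being immediate).

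To prove this lemma I would introduce the radial gauge of $K$: for $z\neq 0$ put $\gamma(z)=\|z\|/\rho_K(z/\|z\|)$, where $\rho_K(u)=\max\{t\ge 0\mid tu\in K\}$, and $\gamma(0)=0$. Star-shapedness makes $K\cap\{tu:t\ge 0\}$ the segment $[0,\rho_K(u)u]$, so $\{z\mid\gamma(z)\le 1\}=K$ and $\gamma$ is positively homogeneous. Decompose $\mathbb{R}^n$ into the cones $C_j=\{tv:t\ge 0,\ v\in F_j\}$ spanned by the faces $F_j$ of $K$ from the origin: these cover $\mathbb{R}^n$ (every ray from $0$ leaves $K$ through some face), and on $C_j$ one has $\gamma(z)=\langle n_j,z\rangle/d_j$, where $n_j$ is the unit outward normal of $F_j$ and $d_j=\operatorname{dist}(0,\operatorname{aff}F_j)\ge\Delta$, with the linear pieces agreeing on the lower-dimensional overlaps $C_j\cap C_i$. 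Thus $\gamma$ is continuous, piecewise linear, with piecewise gradients of norm $1/d_j\le 1/\Delta$, hence globally $(1/\Delta)$-Lipschitz. The lemma is then a one-line estimate: for $z\in K$,
\[
    \gamma(w+\lambda z)\le\gamma(\lambda z)+\tfrac1\Delta\|w\|=\lambda\gamma(z)+\tfrac{\|w\|}{\Delta}\le\lambda+(1-\lambda)=1 ,
\]
so $w+\lambda z\in K$, i.e. $w+\lambda K\subset K$.

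The main obstacle — and the reason the star-shaped case is not a mere restatement of the convex result of \cite{Thach1988} — is that the object used there, the Minkowski gauge $\inf\{t>0\mid z\in tK\}$, fails to be subadditive when $K$ is only star-shaped, so the usual Lipschitz-via-subadditivity argument collapses. The construction above avoids subadditivity altogether by passing to the radial gauge, but then the real work migrates to justifying that $\gamma$ is globally $(1/\Delta)$-Lipschitz: one must check that the cones $C_j$ genuinely tile $\mathbb{R}^n$, that the formula $\langle n_j,\cdot\rangle/d_j$ is consistent on the intersections of cones — both steps are precisely where star-shapedness and the hypothesis ``no face coplanar with $0$'' (ensuring $\Delta>0$) enter — and then that a continuous function which is linear with gradient bounded by $1/\Delta$ on each of finitely many convex pieces is $(1/\Delta)$-Lipschitz along every segment. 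Everything else is bookkeeping.
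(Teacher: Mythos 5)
Your proof is correct, but it is packaged differently from the paper's. The paper fixes a contact point $D\in(x_1+r_1K)\cap\partial Q$, sets $D_1=D-x_1$, $D_2=D-x_2$, splits the segment $[D_1,D_2]$ into finitely many pieces according to which pattern face the ray from $0$ exits through, applies a single-face estimate (lemma~\ref{lipschitz_lemma}) along the chain to get $D_2\in\left(r_1+L|x_2-x_1|\right)K$, and then argues that $D$ would become an interior point of $x_2+rK$ for any $r>r_1+L|x_2-x_1|$, so that $x_2+rK\not\subset Q$. You instead prove the stronger, purely pattern-intrinsic inclusion $w+\lambda K\subset K$ for $|w|\le(1-\lambda)\Delta$, which yields $x'+r'K\subset x+rK\subset Q$ directly. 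The geometric engine is the same in both arguments: your radial gauge $\gamma$ is exactly the quantity the paper manipulates implicitly, your observation that $\gamma$ is linear with gradient $n_j/d_j$ on the cone over face $F_j$ is the content of lemma~\ref{lipschitz_lemma}, and your chaining of a segment through finitely many convex cones is the paper's decomposition of $[D_1,D_2]$ into the sets (\ref{const_face_cut}). What your route buys is a cleaner, reusable lemma (an erosion-type property of star-shaped polytopes, which in particular shows the $\Delta$-ball about $0$ sits inside $K$) and the elimination of the contact-point and ``poking-out'' steps; what it costs is that you must establish the Lipschitz property of $\gamma$ globally rather than along a single segment. Be aware that the verifications you defer --- that each ray from $0$ meets $\partial K$ in a single point, hence that the cones tile $\mathbb{R}^n$ and the linear formulas agree on overlaps --- are genuinely needed and constitute precisely the paper's lemma~\ref{pattern_continuity}; as you correctly note, this ``bookkeeping'' is exactly where star-shapedness and the hypothesis that no face is coplanar with $0$ do their work, so it should be written out rather than waved at in a final version.
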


The proof can be found in appendix section~\ref{proofs_section}.
As a consequence from theorem~\ref{lipschitz_theorem},
we can use a Lipschitzian bounding operation \cite{Scholz2012}
in the branch-and-bound method. Moreover, the lemma statement provides us the
precise Lipchitz constant.

\subsection{Implementation of the inverse graphical optimization scheme}
\label{inverse_impl_section}
Now we move to the implementation of the inverse graphical method.
By the theory before, we need to specify a domain estimation procedure
satisfying the problem approximating property along with the convergent accuracy
estimation algorithm.

% We need correct filled boxes only, so the performance issues
% relating to empty boxes are not adressed in the main study
Let us start with developing the domain estimation procedure.
The key observation
allowing the application to the design centering problems is a relation between
the translated sets inclusion and a morphology operation called \emph{erosion}
\cite{Pustylnik2007,Serra1982}. The latter can be expressed in terms of
\emph{Minkowski sums}. We use the following definition.
\begin{definition}
    Let $A$ and $B$ be sets in $\mathbb{R}^n$. The \textbf{Minkowski sum}
    of $A$ and $B$ is defined as
    \[
        A \oplus B = \{a + b \mid a\in A,\,b\in B\}.
    \]
    The \textbf{erosion} of $A$ by $B$ is defined as
    \[
        A \ominus B = A \setminus \left(\partial A \oplus B\right).
    \]
\end{definition}

The Mikowski sums have been extensively used for
solving motion planning problems \cite{Perez1983,Aronov1997,Lien2008}.
Following this study, in scope of problem (\ref{dc_problem})
\[
    x + r K \subset Q
\]
if and only if
\begin{equation}
    \label{erosion_relation}
    x \in Q \ominus r(-K) = Q \setminus \left(\partial Q \oplus r(-K)\right)
\end{equation}
where
\[
    -K = \{-k \mid k\in K\}
\]
is the \textbf{reflection} of $K$. See Fig. \ref{erosion_figure} for details.

\begin{figure}[h]
    \begin{center}
        \includegraphics[width=0.8\textwidth]{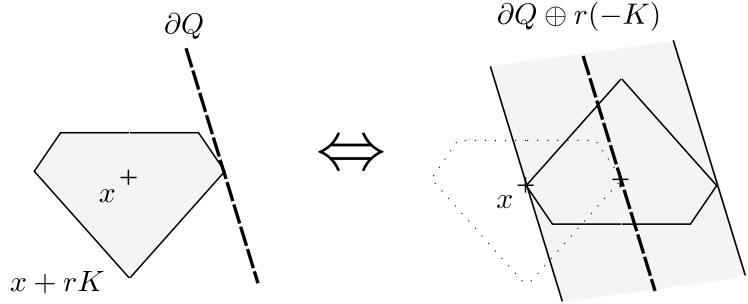}
    \end{center}
    \caption{Relation between design centering and morphology}
    \label{erosion_figure}
\end{figure}

Now let us define domain estimation procedure $C(B,t)$ for
box $B\in\mathcal{B}$ and threshold $t\in\mathbb{R}$ as follows:

\begin{equation}
    \label{dc_classify}
    \begin{aligned}
        C(B,t) &= 0 \mbox{ if $B\cap Q = \emptyset$ or
            $B\subset \left(\partial Q \oplus t(-K)\right)$},\\
        C(B,t) &= 1 \mbox{ if $B \subset Q$ and
            $B\cap \left(\partial Q\oplus t(-K)\right) = \emptyset$},\\
        C(B,t) &= \beta \mbox{ in any other or unknown case}.
    \end{aligned}
\end{equation}
Notice that for star-shaped patterns $r_1 (-K) \subset r_2 (-K)$
whenever $r_2 > r_1 > 0$. Combining this with (\ref{erosion_relation})
it can be easily seen that
$C(B,t)$ is a domain estimation procedure for problem (\ref{dc_problem}).

Intuitively our domain estimation procedure erodes the contour by the
reflection of the pattern. The boxes lying too close to the
contour boundary and not containing a good solution to the problem are
marked empty.
Eventually we shrink the search area to a small region containing
the optimal solution. Formal study of this procedure is presented later in
this section.

Let us continue with practical implementation of (\ref{dc_classify}).
In this section the main references are provided, see \cite{SiteXDScribe}
for particular implementation details.
Notice that the domain estimaiton procedure in the inverse graphical
optimization
algorithm is already defined on equally-sized boxes, so in fact we need
volumetric images \cite{Kaufman1994} of Minkowski sums and the contour.
For the Minkowski sums we apply the general scheme based on convex
decompositions \cite{Varadhan2006}.
First, we compute the convex decomposition of polyhedra and reduce the
overall Minkowski sum to the union of pairwise convex ones.
For the pattern decomposition we implement the surface flood-fill algorithm
inspired by works \cite{Chazelle1997,Ehmann2001}; the contour boundary is
used without
any preprocessing as the set of simplex faces. Then we compute pairwise convex
Minkowski sums with the help of CGAL \cite{SiteCGAL}. After that
we need to acqire the volumetric images of individual convex Minkowski sum.
For this we involve a straightforward algorithm considering convex polytope as
an intersection of half-spaces. As for the contour image, we first
label the boundary boxes by a surface voxelization algorithms \cite{Schwarz2010}
and then separate inner and outer ones by testing a point inside every box.
The latter is done with the help of simultaneous ray-casting procedure,
which is essentially equivalent to a distance field computation
along single direction \cite{Kobbelt2001}.

Although some different techniques of Minkowski sums voxelization are
known \cite{Li2011,Barki2009}, the
method described above is rather easy to implement and fits perfectly
into the inverse graphical method. The performance is good enough as well,
see comparison at the end of the article.

The result domain estimation algorithm for threshold $t$ is as follows,
refer to Fig. \ref{domain_estimation_figure} and \cite{SiteXDScribe} for
details.

\begin{figure}[h]
    \begin{center}
        \includegraphics[height=0.6\textheight]{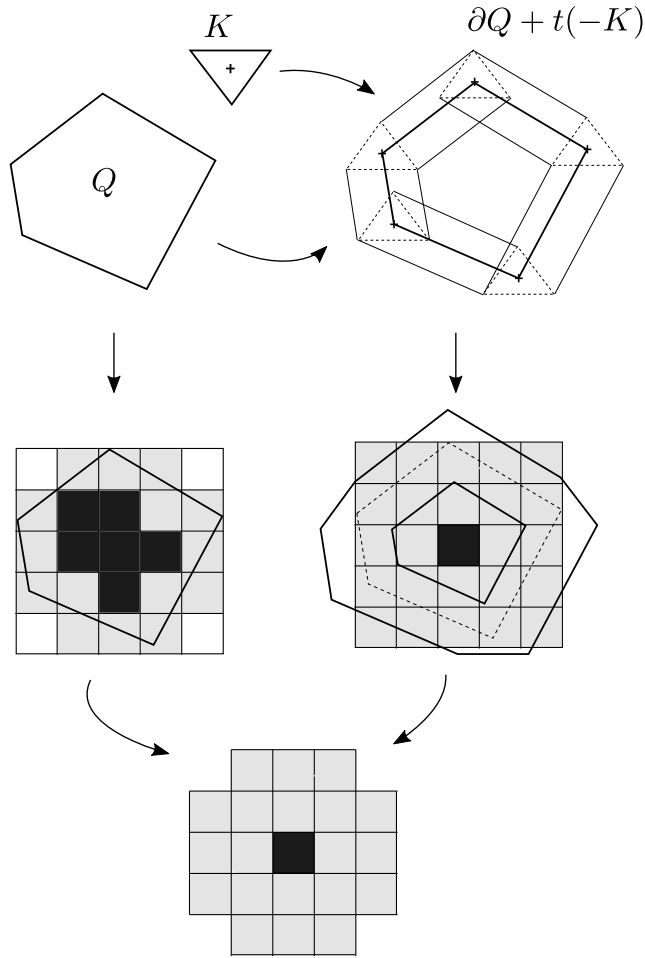}
    \end{center}
    \caption{Domain estimation procedure for design centering}
    \label{domain_estimation_figure}
\end{figure}

\noindent
\begin{minipage}{\textwidth}
\medskip
\hrule
\smallskip
\noindent
\begin{center}
    \large \textbf{The domain estimation procedure for design centering}
\end{center}
\begin{enumerate}
\item Decompose the contour boundary $\partial Q$ and the reflection of
    pattern $-K$ into convex parts.
\item Scale the reflected pattern convex parts by $t$.
\item Compute all the pairwise Minkowski sums of convex parts by the convex-hull
    based algorithm. We call these sums
    \emph{convex elements} of $\partial Q \oplus t(-K)$.
\item\label{vox_elements}
    Compute voxelizations of the convex elements on active boxes.
    This means that for
    every box under consideration in the inverse graphical algorithm
    we decide whenever it is inner, outer or boundary relative to the
    convex element.
\item\label{vox_outside}
    Analogiously, we compute the voxelization of the contour itself.
\item\label{vox_unite}
    Unite all the voxelizations. This is done by simple boolean operations on
    the results of previous steps for every box. By 
    (\ref{dc_classify}) we put $C(B,t) = 0$ if $B$ is outside the contour
    at step~\ref{vox_outside} or $B$ is inside any element at
    step~\ref{vox_elements}; $C(B,t) = 1$ if $B$ is inside the contour at
    step~\ref{vox_outside} and $B$ is outside all the elements at
    step~\ref{vox_elements}; $C(B,t) = \beta$ in all the remaining cases.
\end{enumerate}
\smallskip
\hrule
\smallskip
\end{minipage}

The theoretical result is given by the following theorem.

\begin{theorem}
    \label{domain_estimation_theorem}
    The domain estimation procedure defined by the algorithm above is
    problem-approximating.
\end{theorem}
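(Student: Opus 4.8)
The plan is to check Definition~\ref{approx_def} directly for the relation $C$ that the algorithm computes. In the design-centering notation this amounts to the following: for every interior point $x_0\in\sint(Q)$ of the feasible region and every threshold $t\ge 0$ with $r(x_0)>t$, we must produce one radius $\delta>0$ such that the algorithm outputs $C(B,t)=1$ for every box $B$ with $x_0\in B$ and $\delta(B)<\delta$. I would split the proof into two layers. First, a purely set-theoretic step showing that the idealized relation (\ref{dc_classify}) already takes the value $1$ on all sufficiently small boxes around $x_0$. Second, a verification that the concrete voxelization pipeline (the boxed procedure above) actually reproduces this verdict once the boxes are small.

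For the first layer I would exhibit two positive ``clearances'' at $x_0$. The clearance to the contour boundary, $\rho_Q:=\mathrm{dist}(x_0,\partial Q)$, is positive because $x_0\in\sint(Q)$ and $\partial Q$ is closed. The clearance to the eroded-out region, $\rho_E:=\mathrm{dist}\bigl(x_0,\partial Q\oplus t(-K)\bigr)$, is the crux. To get $\rho_E>0$ I would argue: the radius value $r(x_0)$ is attained, because $\{r\ge 0\mid x_0+rK\subseteq Q\}$ is closed ($Q$ closed, $K$ compact) and bounded ($0\in\sint(K)$, $Q$ bounded); star-shapedness of $K$ about the origin yields $tK\subseteq r(x_0)K$, hence $x_0+tK\subseteq x_0+r(x_0)K\subseteq Q$; by the erosion relation (\ref{erosion_relation}) this is exactly the statement $x_0\in Q\setminus\bigl(\partial Q\oplus t(-K)\bigr)$, so $x_0\notin\partial Q\oplus t(-K)$; and since $\partial Q\oplus t(-K)$ is a Minkowski sum of two compact sets it is compact, hence closed, and therefore sits at positive distance from $x_0$. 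Putting $\delta_0:=\min(\rho_Q,\rho_E)$, any box $B\ni x_0$ with $\delta(B)<\delta_0$ is contained in the closed ball of radius $\delta(B)$ about $x_0$ and hence satisfies $B\subseteq\sint(Q)$ and $B\cap(\partial Q\oplus t(-K))=\emptyset$; by (\ref{dc_classify}) its true classification is $1$.

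For the second layer I would unwind the boxed procedure, which outputs $1$ exactly when the contour voxelization marks $B$ inner (step~\ref{vox_outside}) and every convex element $E$ of $\partial Q\oplus t(-K)$ is marked outer against $B$ (step~\ref{vox_elements}). Because $B\subseteq\sint(Q)$, the box crosses no boundary simplex of $\partial Q$ and contains an interior point, so the surface-voxelization-plus-ray-casting step reports inner --- this step, rather than a half-space test, is what handles the (possibly non-convex) contour. For each element, write $E=\bigcap_j H_j$ as the intersection of its facet half-spaces; $x_0\notin E$ then forces a strict violation $x_0\notin H_{j_0}$ for some $j_0$, and if $\delta(B)$ is below the corresponding margin then all of $B$ lies outside $H_{j_0}$, so the half-space test of step~\ref{vox_elements} reports outer. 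Since $Q$ and $K$ are polytopes, their convex decompositions and hence the family of elements $E$ are finite, so taking $\delta$ to be the minimum of $\delta_0$ and these finitely many margins gives a single $\delta>0$ valid for all admissible $B$; then $C(B,t)=1$, and $C$ is problem-approximating.

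I expect the main obstacle to be this two-layer bookkeeping rather than any one computation: one has to be precise about what ``inner/outer/boundary'' means for each subroutine and then certify that the idealized relation (\ref{dc_classify}) is recovered by the full voxelization pipeline in the small-box limit, which is also where the polytope hypothesis genuinely enters (finitely many convex pieces, each $E$ cut out by finitely many half-spaces). The cleanest piece of honest geometry, which I would isolate as a lemma, is the implication ``$x_0\in\sint(Q)$ and $r(x_0)>t$ $\Rightarrow$ $x_0\notin\partial Q\oplus t(-K)$'', obtained from the erosion relation together with the star-shaped scaling $tK\subseteq r(x_0)K$.
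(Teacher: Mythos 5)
Your proposal is correct and follows essentially the same route as the paper's proof: it verifies Definition~\ref{approx_def} directly by combining the erosion relation (\ref{erosion_relation}) with the observation that a sufficiently small box around an interior point with $r(x_0)>t$ misses both $\partial Q$ and all convex elements of $\partial Q\oplus t(-K)$, and is therefore voxelized exactly. Your version is merely more explicit where the paper is terse --- you replace the paper's appeal to continuity of the objective with quantitative clearances $\rho_Q,\rho_E$ and spell out the half-space margin argument for why the per-element test reports ``outer'' --- but the decomposition into ``true classification of small boxes'' plus ``exactness of the pipeline on boxes missing all geometry'' is the same.
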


See appendix section~\ref{proofs_section} for the proof.

Let us continue with constructing a convergent accuracy estimation procedure
for the design centering problem under consideration.
Although the general accuracy entimation procedure described in
section~\ref{algorithm_subsection} could be used, it is a bit complicated to
prove its convergency. Nevertheless, it still gives a correct result by
theorem~\ref{accuracy_theorem} and even provided a better
performance in practice, refer to section~\ref{comparison_section} for
details.
In this section we propose a simple method based on the Lipschitz property
of the radius value to make a theoretically complete implementation.
Precisely, the following theorem holds.

\begin{theorem}
    \label{lipschitz_accuracy}
    Consider a problem of form (\ref{dc_problem}) and the inverse graphical
    optimization algorithm. Let $L$ be the Lipschitz constant of
    the radius value by theorem~\ref{lipschitz_theorem}, $S$ be
    the domain scale and $\delta$ be
    the current accuracy in the algorithm. Then
    \begin{equation}
        \label{dc_acc_estimate}
        \delta' = (1 + \frac{L}{S})\delta
    \end{equation}
    is the convergent actual accuracy estimate.
\end{theorem}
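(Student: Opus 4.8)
The plan is to show two things: first, that $\delta' = (1 + L/S)\delta$ satisfies the actual accuracy estimate requirement $\delta' \ge t - \min_F f(x)$ (here in the maximization setting, $t < \max r(x)$ and we need $\delta' \ge \max r(x) - t$, adjusting signs as the paper does); and second, that this choice is convergent in the sense of Definition~\ref{convergent_definition}. The key geometric input is that at the current step the algorithm has pruned all boxes that were classified empty for threshold $t - \delta$ (equivalently $t + \delta$ in the max convention), so there is no filled box for that threshold, meaning the optimal center $x^*$ achieving the maximal radius lies within distance $\delta(X_i) = \delta/S$ of a surviving box — actually $x^*$ itself lies in the union of the active (non-empty) boxes, since an empty box cannot contain $x^*$.

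First I would establish the accuracy bound. Let $x^*$ be an optimal center with $r(x^*) = \max r(x)$, and let $t$ be the current threshold after step~\ref{alg_bound}, so that there is a filled box for $t$ but none for $t+\delta$ (using the maximization orientation; I will align signs with the paper's convention in the write-up). Pick any point $\bar x$ in a filled box for $t$; then $r(\bar x) \ge t$. Since no box is filled for $t+\delta$, the point $x^*$ lies in some active box that is boundary (not empty) for threshold $t+\delta$; call its representative box $B^*$ with $\delta(B^*) = \delta/S$. Within $B^*$ there is a point $y$ that is \emph{not} a good solution for $t+\delta$, i.e. $r(y) < t+\delta$ — otherwise $B^*$ would be classifiable as filled. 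Wait: more carefully, I would use that $x^*\in B^*$ and $B^*$ is not empty for $t+\delta$, hence $B^*\not\subset(\partial Q\oplus(t+\delta)(-K))$ and $B^*\cap Q\ne\emptyset$; combined with the fact that $B^*$ fails to be filled, there is $y\in B^*$ with $r(y) \le t+\delta$ — actually the cleanest route is: since the algorithm did not prune $B^*$ but also did not find it filled, and it \emph{did} find some filled box for $t$, I only need a lower bound on $t$ in terms of $\max r(x)$. Use the Lipschitz property from Theorem~\ref{lipschitz_theorem}: $|r(x^*) - r(z)| \le L\,|x^* - z|$ for the point $z$ in a filled-for-$t$ box nearest to $x^*$. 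The distance from $x^*$ to the nearest filled box is bounded by $\delta$ (because $x^*$ sits in a box of size $\delta/S$ adjacent, in the worst case, to where filling occurs) — I would bound this crossing distance by $\delta/S$ for being in a non-empty box plus the jump $\delta$ in threshold, giving $\max r(x) - t \le L\cdot(\delta/S) + \delta = (1 + L/S)\delta = \delta'$. This is the accuracy bound.

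Then I would verify convergence. Given $\varepsilon > 0$, choose $\delta(\varepsilon) = \varepsilon/(1 + L/S)$ and take $\theta(\varepsilon)$ arbitrary (say $\theta(\varepsilon) = \varepsilon$), since the estimate here depends only on $\delta$, not on the current threshold's closeness to the optimum. Then whenever the algorithm reaches $\delta < \delta(\varepsilon)$ we get $\delta' = (1 + L/S)\delta < \varepsilon$, which is exactly Definition~\ref{convergent_definition}. Because $\delta$ is halved each outer iteration (step~\ref{alg_iterate}), this threshold on $\delta$ is reached in finitely many steps, so the convergence requirement is met.

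The main obstacle I anticipate is the first part: pinning down precisely the geometric distance from the optimal center $x^*$ to the nearest box that was classified filled for the current threshold $t$, and showing it is at most $\delta/S$. This requires carefully chaining (i) the fact that $x^*$ lies in some active box of diameter $\delta/S$, (ii) that active boxes are exactly the non-empty ones for the previous threshold test, and (iii) the monotonicity $r_1(-K)\subset r_2(-K)$ for $r_2 > r_1$ that ties the erosion picture to the radius value $r(\cdot)$. Once the distance bound $|x^* - z| \le \delta/S$ is in hand for a suitable filled-box point $z$ with $r(z) \ge t - \delta$ (accounting for the one-step threshold gap), applying Theorem~\ref{lipschitz_theorem} to get $\max r - t \le L\delta/S + \delta$ is routine.
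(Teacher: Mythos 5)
Your overall shape is the paper's: bound $\max_Q r(x)-t$ by the threshold step $\delta$ plus a Lipschitz term over one box diameter $\delta(X_i)=\delta/S$, giving $\delta'=(1+L/S)\delta$, and then take $\delta(\varepsilon)=\frac{S}{S+L}\varepsilon$ with $\theta(\varepsilon)$ arbitrary for convergence. The convergence half is fine as written. But the route you keep returning to for the accuracy bound --- measuring the distance from the optimal center $x^*$ to the nearest box \emph{classified filled} for $t$ --- is a dead end, and your closing paragraph shows you still intend to take it. Nothing controls that distance: the filled boxes found during step~\ref{alg_bound} may sit near a different near-optimal region of $Q$, arbitrarily far from $x^*$. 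The argument that works (and which you brush against mid-paragraph before retreating) uses only the \emph{absence} of filled boxes at threshold $t+\delta$: by Corollary~\ref{feasible_corollary} the point $x^*$ lies in some active box $B^*$ with $\delta(B^*)=\delta/S$; since $B^*$ was not marked filled for $t+\delta$, it either meets the complement of $Q$ or meets $\partial Q\oplus(t+\delta)(-K)$, hence contains a point $y$ with $r(y)\le t+\delta$; then $r(x^*)\le r(y)+L|x^*-y|\le t+\delta+L\delta/S$. No filled box enters the argument at all, and the pruned boxes are harmless since $r\le t$ on them.

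The one step you treat as automatic but must actually be secured is the implication ``$B^*$ not marked filled $\Rightarrow$ $B^*$ genuinely fails the filled condition of (\ref{dc_classify}).'' The general definition of a domain estimation procedure explicitly permits an actually filled box to be returned as $\beta$ (``in any other or unknown case''), and with such a procedure the theorem would be false. What saves the argument is a property of the specific design-centering procedure --- every actually filled box is marked precisely --- which the paper establishes inside the proof of Theorem~\ref{domain_estimation_theorem} and then cites here. Your phrase ``otherwise $B^*$ would be classifiable as filled'' conflates \emph{classifiable} with \emph{classified}; you need to invoke (or reprove) that exact-marking property to close the gap.
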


For the proof refer to appendix section~\ref{proofs_section}.
Note that the value of $L$ needed for (\ref{dc_acc_estimate})
is easy to compute due to theorem~\ref{lipschitz_theorem}. Now all the
supporting algorithms required for the practical implementation of
the inverse graphical optimization method for problem (\ref{dc_problem})
are specified.

\subsection{Comparison results}
\label{comparison_section}
In this section we discuss the practical performance of the inverse and
straight approaches to the diamond cutting problem described above.
The algorithms have been implemented
in C++ programming language and tested on various data using a machine
with Core i7-8750H CPU and 16Gb RAM. For the
implementation details refer to the source code published by the author
\cite{SiteXDScribe}. We emphasize that only
single-threaded CPU-based implementations have been tested as we aim on
comparison of the general approaches.
It is worth noting that one can apply more optimized algorithms
mentioned above and in the references, especially parallelized
and GPU-based ones. We leave these ideas for the future research.

The testing data used in comparison consists of publicly available diamond
cut models \cite{SiteLapidary}
along with generated rough stone models of different resolution and complexity.
Some examples are shown in Fig. \ref{patterns_figure} and
Fig. \ref{contours_figure}.
The precise data can be found at \cite{SiteXDScribe}.
Note that we involve non-convex patterns as opposed to the traditional problem
research. Besides, we have included some defects and irregularities in the
rough stone models as they are common in the real-life industry

\begin{figure}[h]
    \begin{center}
        \includegraphics[width=0.7\textwidth]{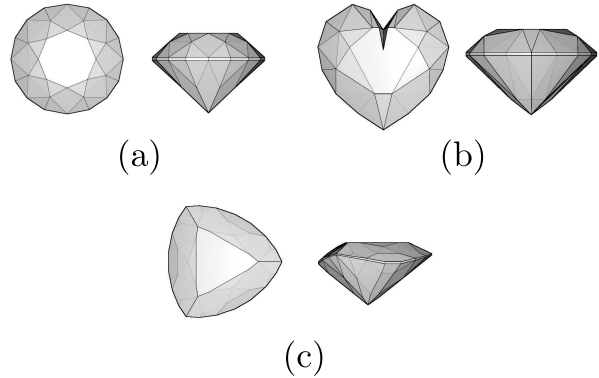}
    \end{center}
    \caption{Diamond cut models:
        (a)~standard\_126, (b)~rosehrttrue\_104, (c)~1stwave\_172}
    \label{patterns_figure}
\end{figure}

\begin{figure}[h]
    \begin{center}
        \includegraphics[width=0.9\textwidth]{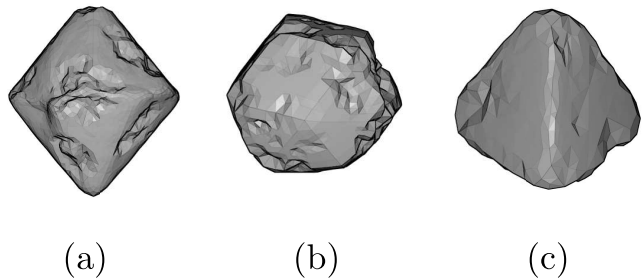}
    \end{center}
    \caption{Rough stone models:
        (a)~octa\_rough\_6432, (b)~rhombic\_rough\_2304, (c)~tetra\_rough\_912}
    \label{contours_figure}
\end{figure}

Let us describe briefly the particular algorithms of global numerical
optimization we compare. First, we have choosen a branch-and-bound variant due
to Gourdin, Hansen and Jaumard, as it is stated to be the fastest
in \cite{Horst1995}. We denote it by "GHJ" later in this section.
Next, we consider modern Lipschitzian optimization schemes, not related
to the branch-and-bound directly. The ones available in the open libraries are
the DIRECT algorithm \cite{Jones1993,Gablonsky2001} as
provided by the nlopt library \cite{SiteNlopt}
and the LIPO scheme \cite{Malherbe2017} implemented by the
dlib C++ library \cite{SiteDlib}. Both algorithms are direct-search ones
by nature. Recent research \cite{Malherbe2017} indicates that
their performance is competitive, however we were unable to reach meaningful
results with LIPO. The reason seems to be in random sampling of the domain
which takes too much time finding feasible points. Additionally, it is quite
hard to implement a correct stopping criterion to get a proven optimal result.
So we do not present this method
in our research and refer the readers to \cite{Malherbe2017} for a relative
comparison with DIRECT and to \cite{SiteXDScribe} for the implementation and
testing.
As for DIRECT algorithm, the main problem is the absense of the
proven precision estimations in nlopt library.
So we have adapted the Lipschitzian bounds in the stopping criterion.
For the comparisons we have choosen
the best variants provided by the library, namely scaled DIRECT and
locally-biased non-scaled DIRECT with randomization. We refer to them as
"DIRECT" and "local DIRECT" respectively.

Concerning the inverse graphical method we include two variants in comparison
differing only by the accuracy estimation procedure. As was described above,
we implement the general scheme from section~\ref{gen_section} as well as
a Lipschitzian accuracy estimation procedure provided by
theorem~\ref{lipschitz_accuracy}. We denote these variants as
"general inverse" and "Lipschitzian inverse" respectively.

Let us proceed to the actual results. First, we compare the running
times while searching for a proven global
optimal solution with a specified precision. The results are presented
in Table~\ref{runtime_proven} with "$>$ 5 hours" meaning that the
algorithm failed to reach the target accuracy within five hours.
As we can see, the inverse methods are superior, especially on hard
problems and high precision requested. This remains true even when
using the same Lipschitzian accuracy estimation as within the straight 
algorithms.

\begin{table}[h]
    \center
    \resizebox{\textwidth}{!}{
    \begin{tabular}[c]{>{\bfseries}LCCCCC}
        \hline
        \textbf{Problem} & \textbf{GHJ} & \textbf{DIRECT} &
        \textbf{local DIRECT} &
        \textbf{general inverse} & \textbf{Lipschitzian inverse} \\
        \hline \\
        standard\_126 rhombic\_rough\_576 1e-2 &
        99 & 292 & 478 & 12 & 15 \\ \\
        standard\_126 rhombic\_rough\_576 1e-3 &
        336 & 1665 & 2375 & 41 & 49 \\ \\
        standard\_126 rhombic\_rough\_576 1e-4 &
        679 & 4663 & 6638 & 62 & 69 \\ \\
        bstilltrue\_194 \mbox{rhombic\_rough\_2304} 1e-2 &
        $>$ 5 hours & $>$ 5 hours & $>$ 5 hours & 193 & 638 \\ \\
        rosehrttrue\_104 tetra\_rough\_912 1e-2 &
        5329 & 10347 & $>$ 5 hours & 69 & 78 \\ \\
        rosehrttrue\_104 tetra\_rough\_912 1e-3 &
        11545 & $>$ 5 hours & $>$ 5 hours & 84 & 89 \\ \\
        1stwave\_172 tetra\_flat\_3648 \newline 1e-3 &
        1568 & $>$ 5 hours & $>$ 5 hours & 160 & 165 \\ \\
        novice7\_86 octa\_rough\_6432 1e-2 &
        1688 & 4526 & 5680 & 359 & 672 \\ \\
        novice7\_86 octa\_rough\_6432 1e-3 &
        9434 & $>$ 5 hours & $>$ 5 hours & 1235 & 1268 \\ \\
        \hline
    \end{tabular}}
    \caption{Runtimes acquiring proven solution, seconds}
    \label{runtime_proven}
\end{table}

Next, we compare the time needed to accomplish the specified solution,
see Table~\ref{runtime_reach}.
Now we can see that DIRECT methods are reaching the
good points earlier during computation.
However, it is worth noting that in this case we need to know beforehand
the optimal value we are actually looking for to stop the algorithm,
otherwise we have no measure of the result quality.

\begin{table}[h]
    \center
    \resizebox{\textwidth}{!}{
    \begin{tabular}[c]{>{\bfseries}LCCCCC}
        \hline
        \textbf{Problem} & \textbf{GHJ} & \textbf{DIRECT} &
        \textbf{local DIRECT} &
        \textbf{general inverse} & \textbf{Lipschitzian inverse} \\
        \hline \\
        standard\_126 rhombic\_rough\_576 v1.429 &
        97.18 & 7.70 & 3.44 & 37.40 & 37.54 \\ \\
        bstilltrue\_194 \mbox{rhombic\_rough\_2304} v0.312 &
        $>$ 5 hours & 10.69 & 8.61 & 369.30 & 369.45 \\ \\
        rosehrttrue\_104 tetra\_rough\_912 v0.288 &
        10147.29 & 14.43 & 24.57 & 94.52 & 95.04  \\ \\
        1stwave\_172 tetra\_flat\_3648 \newline v5.089 &
        1186.32 & 117.70 & 34.67 & 150.76 & 151.03 \\ \\
        novice7\_86 octa\_rough\_6432 v1.279 &
        1587.34 & 87.43 & 84.09 & 1260.66 & 1247.45 \\ \\
        \hline
    \end{tabular}}
    \caption{Runtimes reaching objective value, seconds}
    \label{runtime_reach}
\end{table}

Combining all the results together we can conclude that the inverse
graphical method is very promising for finding precise global optimal solutions
to the optimization problems, and indeed provide ones within
a practically tractable time. However, when one needs any good solution
quickly regardless of its global quality, it is sensible to use something
like DIRECT method or even local search.

In addition, note that one need to know the Lipschitz constant of the objective
to get the proven global optimum by the straight methods. In contrast,
the inverse scheme with general accuracy estimation does not depend on it
to acquire the guaranteed result. Though it has not been proved
theoretically that the algorithm would always finish in that case, in practice
it does and even shows the best performance.

%===CONCLUSIONS=================================================================
\section{Conclusions and future work}
\label{con_section}
A novel inverse approarch to deterministic global optimization has been
developed within this article. It is based on domain estimation instead of
objective value computation. We have proved theoretically its convergency and
studied requirements on the procedures involved. The algorithm has been
implemented in practice demonstrating clear advantage of the new method while
searching for a proven global optimum for the design centering problem.
Additionally, it have been shown that the
inverse method is able to reach a guaranteed optimal solution even without
knowing the Lipschitz constant, and the current theoretical results only
depend on the continuity of the objective.

Concerning the application to the design centering, the problem has been
extended to the case of non-convex star-shaped patterns and the complete
implementation of
various optimization methods has been provided.

Among drawbacks and future directions we should note that the inverse scheme 
depends heavily on estimation procedures and particular algorithms.
These procedures differ in
essence from the widely used black-box objective computations,
and they could be hard or
even impossible to implement for particular problems.
So, it would be of high interest to research more applications of the inverse
optimization approach, not necessarily the graphical one.
Besides, it still remains to study sufficient
conditions for the inverse graphical method to converge with the general
accuracy estimation procedures. Nevertheless, the inverse technique is worth
considering for the practical search of global solutions to optimization problems.

%===APPENDIX====================================================================
\appendix
\section{Proofs}
\label{proofs_section}
\subsection{Supporting theory}
First, let us prove some correctness lemmas for the inverse graphical
optimization algorithm.

\begin{lemma}
    \label{subbox_lemma}
    Consider the problem of form (\ref{problem}), any domain estimation
    procedure $C$ and any threshold $t\in\mathbb{R}$ satisfying
    \[
        t > \min_F f(x).
    \]
    Suppose box $B$ is empty under $C$ for threshold $t$;
    then for every threshold $t' \le t$ every box $B'\subset B$
    is empty as well.
\end{lemma}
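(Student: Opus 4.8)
The plan is to reduce the claim to the defining property of a domain estimation procedure together with the obvious monotonicity of the emptiness condition in both arguments. First I would spell out the hypothesis: $B$ being empty under $C$ for threshold $t$ means $C(B,t)=0$, and by the definition of a domain estimation procedure this forces that for every $x\in B$ either $x\notin F$ or $f(x)>t$.

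Next, fix an arbitrary threshold $t'\le t$ and an arbitrary box $B'\subset B$. For any point $x\in B'$ we also have $x\in B$, so by the previous step either $x\notin F$, or $f(x)>t\ge t'$ and hence $f(x)>t'$; in both cases $x\notin F$ or $f(x)>t'$. Since $x\in B'$ was arbitrary, the pair $(B',t')$ satisfies exactly the condition that the specification of $C$ attaches to the output value $0$, so $C(B',t')=0$, i.e. $B'$ is empty for threshold $t'$, which is the claim.

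I do not expect any genuine obstacle here; the argument is essentially a one-line monotonicity observation, and the lemma is a bookkeeping fact that will be reused when reasoning about subdivided boxes in the termination proof. The only point worth a careful word is the logical direction in the specification of $C$ — the definition states the emptiness condition only as a necessary condition of the output $0$ — so the final step must be read as: $(B',t')$ meets precisely the condition characterizing an empty box, hence $B'$ is classified empty. I would also remark that the standing hypothesis $t>\min_F f(x)$ plays no role in this propagation argument itself; it is carried along because it is an invariant of the algorithm (Lemma~\ref{bound_lemma}) and will be relevant only where the lemma is subsequently applied.
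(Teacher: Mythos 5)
Your proof coincides with the paper's: both reduce the claim to the one-line observation that the defining condition for emptiness (every $x\in B$ has $x\notin F$ or $f(x)>t$) is inherited by every subbox $B'\subset B$ and every smaller threshold $t'\le t$. Your closing remark about the \emph{only if} direction in the specification of $C$ identifies a genuine subtlety that the paper's own proof also passes over silently (it likewise concludes ``$B'$ is empty by definition''), so you have matched, and even slightly sharpened, the intended argument.
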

\begin{proof}
    By the definition of the domain estimation procedure
    for every point $x\in B$ either $x\not\in F$ or
    $f(x) > t$. Consequently, for every point $x'\in B'\subset B$ either
    $x'\not\in F$ or $f(x') > t \ge t'$. So, $B'$ is empty by definition.
\end{proof}

\begin{corollary}
    \label{feasible_corollary}
    Consider the inverse graphical optimization algorithm for the problem
    of form (\ref{problem}). Then the set of active boxes always covers the
    feasible set for the current threshold, i.e.
    \begin{equation}
        \label{feasible_equation}
        F(t) = \{x\in F \mid f(x) < t\} \subset \bigcup_\mathcal{X} X_i
    \end{equation}
    at any step of the algorithm.
\end{corollary}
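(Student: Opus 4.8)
The plan is to establish (\ref{feasible_equation}) as an invariant preserved throughout the execution of the algorithm, proceeding by induction on the algorithm's steps. The base case is immediate: right after step~\ref{alg_init} we have $\mathcal{X} = \{X\}$ with $F \subset X$, so $F(t) \subset F \subset X = \bigcup_{\mathcal{X}} X_i$ whatever the current value of $t$ is.

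For the inductive step I would first note that the only operations that can affect the invariant are those changing the active set $\mathcal{X}$ or the threshold $t$ --- namely the prune-and-lower loop inside step~\ref{alg_bound} and the box splitting at steps~\ref{alg_branch} and~\ref{alg_result}; the update of $\delta$ at step~\ref{alg_iterate} is irrelevant since $\delta$ does not occur in (\ref{feasible_equation}). Splitting is the trivial case: replacing each $X_i$ by the $2^n$ pieces obtained by bisecting along every axis leaves $\bigcup_{\mathcal{X}} X_i$ unchanged (the closed pieces genuinely cover the parent box) and does not alter $t$, so the invariant carries over verbatim. For one pass of the step~\ref{alg_bound} loop I would put $t_{\mathrm{new}} = t - \delta$ and observe that the boxes removed are exactly those with $C(X_i, t_{\mathrm{new}}) = 0$; by the defining property of a domain estimation procedure, such a box contains no point $x$ with $x \in F$ and $f(x) < t_{\mathrm{new}}$, i.e.\ it is disjoint from $F(t_{\mathrm{new}})$. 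Since $\delta > 0$ we have $F(t_{\mathrm{new}}) \subset F(t)$, and the induction hypothesis places any $x \in F(t_{\mathrm{new}})$ inside some active box; that box cannot have been removed (else $x \notin F(t_{\mathrm{new}})$), so $x$ lies in a surviving box. This re-establishes the invariant for the updated $\mathcal{X}$ and $t_{\mathrm{new}}$, and iterating over the successive passes of the loop finishes the case.

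I do not anticipate a genuine difficulty --- the statement amounts to careful bookkeeping. The one point needing attention is the loop in step~\ref{alg_bound}: the threshold is lowered simultaneously with the pruning, so one must check the invariant is restored after each single pass rather than only once the loop terminates, and must use the monotonicity $F(t_{\mathrm{new}}) \subset F(t)$ to argue that the boxes discarded for $t_{\mathrm{new}}$ are harmless. Lemma~\ref{subbox_lemma} is available but, for this particular statement, the bare definition of an empty box already suffices.
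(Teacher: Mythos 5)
Your proof is correct and follows essentially the same route as the paper's: both establish (\ref{feasible_equation}) as an invariant over the algorithm's steps, noting that splitting preserves the union, that $t$ only decreases, and that the boxes pruned in step~\ref{alg_bound} are empty for the new threshold and hence disjoint from $F(t-\delta)\subset F(t)$. The only cosmetic difference is that the paper cites Lemma~\ref{subbox_lemma} at the final step, whereas you correctly observe that the bare definition of an empty box already suffices; your write-up is in fact more explicit than the paper's three-line sketch.
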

\begin{proof}
    At the beginning (\ref{feasible_equation}) holds obviously.
    After that $t$ is nonincreasing and we prune the empty boxes only, i.e.
    the ones containing no feasible points satisfying $f(x) < t$.
    It then follows from lemma~\ref{subbox_lemma} that (\ref{feasible_equation})
    holds at any step of the algorithm.
\end{proof}

Now we continue with the proof that for every threshold satisfying
$t > \min_F f(x)$ the boxes inside the
feasible set eventually become filled by the inverse graphical algorithm.
This property is crucial for the convergence of the algorithm iterations
at steps~\ref{alg_bound} and \ref{alg_result} proved subsequently.

\begin{lemma}
    \label{filled_lemma}
    Consider the problem of form (\ref{problem}) and
    domain estimation procedure $C$.
    Suppose $C$ is problem approximating;
    then for every threshold $t$ satisfying
    \[
        t > \min_F f(x)
    \]
    there exists $\delta > 0$ such that every cover
    of
    \[
        F(t) = \{x\in F \mid f(x) < t\} \subset F
    \]
    by boxes of size at most $\delta$ contains
    a filled box.
\end{lemma}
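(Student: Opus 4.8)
The plan is to reduce the statement to a single application of the problem-approximating property (Definition~\ref{approx_def}) at one conveniently chosen point. First I would exhibit a point $x_0\in\sint(F)$ with $f(x_0)<t$; then Definition~\ref{approx_def} yields a threshold $\delta_0>0$ such that every box $B$ with $x_0\in B$ and $\delta(B)<\delta_0$ is filled; finally, since $x_0\in F(t)$, any sufficiently fine cover of $F(t)$ must contain such a box.

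For the first part, note that $t>\min_F f(x)$ means $F(t)\neq\emptyset$, so fix some $x^*\in F$ with $f(x^*)<t$. In general $x^*$ may lie on $\partial F$, so I would perturb it inward: using that $F$ coincides with the closure of its interior (a property of the feasible regions considered here, e.g. it holds for the polytopes $Q$ of Section~\ref{app_section}), pick $y_k\in\sint(F)$ with $y_k\to x^*$. By continuity of $f$ we get $f(y_k)\to f(x^*)<t$, so for $k$ large enough $x_0:=y_k$ satisfies $x_0\in\sint(F)$ and $f(x_0)<t$.

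Now apply Definition~\ref{approx_def} to $x_0$ and $t$: there is $\delta_0>0$ with $C(B,t)=1$ for every box $B$ such that $x_0\in B$ and $\delta(B)<\delta_0$. Set $\delta=\delta_0/2$. Let $\{B_i\}$ be any cover of $F(t)$ by boxes of diameter at most $\delta$. Since $x_0\in F$ and $f(x_0)<t$ we have $x_0\in F(t)$, hence $x_0\in B_{i_0}$ for some index $i_0$; then $\delta(B_{i_0})\le\delta<\delta_0$, and therefore $C(B_{i_0},t)=1$, i.e. $B_{i_0}$ is a filled box. This is precisely the assertion of the lemma.

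I expect the only genuine obstacle to be the inward perturbation above: the point witnessing $f<t$ may sit on the feasible boundary, where no domain estimation procedure can certify a filled box, so it is essential to move into $\sint(F)$ while remaining in the open sublevel set $\{f<t\}$ — exactly where continuity of $f$ together with the fact that $F$ is the closure of its interior are used. The rest is routine unwinding of Definition~\ref{approx_def} and of the meaning of a cover of $F(t)$.
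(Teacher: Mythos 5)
Your proposal is correct and follows essentially the same route as the paper's proof: both arguments reduce the lemma to a single application of Definition~\ref{approx_def} at one interior point $x_0\in\sint(F)$ with $f(x_0)<t$, and then observe that any sufficiently fine cover of $F(t)$ must contain a (hence filled) box around $x_0$; your halving $\delta=\delta_0/2$ to reconcile ``size at most $\delta$'' with the strict inequality in the definition is a welcome touch of care. The only divergence is in how the interior witness is produced: the paper takes a small ball inside $F$ around the minimizer on which $f<t$ (implicitly assuming such a ball exists, i.e.\ that the sublevel set meets $\sint(F)$), whereas you perturb an arbitrary point of $F(t)$ inward using continuity of $f$ together with the regularity assumption $F=\overline{\sint(F)}$. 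Note that this last assumption is not among the paper's stated hypotheses --- the problem statement only guarantees that $F$ has an interior point --- so strictly speaking you are importing an extra condition; to be fair, the paper's own ball-around-the-minimizer step needs a comparable (and equally unstated) regularity of $F$ to go through, so your version is, if anything, the more explicit about where the geometry of $F$ is actually used.
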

\begin{proof}
    Consider the objective function $f$ of the problem. By definition,
    $f$ is continuous and $F$ is compact with non-empty interior,
    so the problem solution
    \[
        f_{min} = \min_F f(x)
    \]
    is reached at some point.
    Assuming $f_{min} < t$ we additionally imply that
    $f(x) < t$ holds at least on some ball inside $F$ centered at
    $f_{min}$.
    Therefore, the set
    \[
        I(t) = \{x\in \sint(F) \mid f(x) < t\} \subset F(t)
    \]
    is nonempty. Consider any $x\in I(t)$ and $t$ as threshold.
    By definition~\ref{approx_def} there exists
    $\delta > 0$ such that every box $B$ of size $\delta(B) < \delta$
    containing $x$ is filled. Every cover of $F(t)$ by boxes of size
    at most $\delta$ contains such a box, so the proof is completed.
\end{proof}

\begin{corollary}
    \label{decreasing_corollary}
    Consider the inverse graphical optimization algorithm for the problem
    of form (\ref{problem}). If the algorithm does not stop then
    \[
        \delta\to 0.
    \]
    In addition, suppose the domain estimation procedure is
    problem approximating. 
    Then for every $\theta > 0$
    after finite number of steps in the algorithm
    \begin{equation}
        \label{convergence_equation}
        t < \min_F f(x) + \theta
    \end{equation}
    if the algorithm does not stop before.
\end{corollary}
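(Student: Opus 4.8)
The plan is to handle the two assertions separately. For the first, $\delta \to 0$, I would argue by contradiction on the structure of the algorithm. Observe that $\delta$ is only ever modified at step~\ref{alg_iterate}, where it is halved, and this happens precisely once per pass through step~\ref{alg_branch}. So if the algorithm does not stop, either it passes through step~\ref{alg_branch} infinitely often — in which case $\delta$ is halved infinitely often and hence $\delta \to 0$ — or it passes through step~\ref{alg_branch} only finitely many times, which would force the algorithm to loop forever inside step~\ref{alg_bound} (the threshold-tightening loop) without ever returning to step~\ref{alg_test}. I would rule the latter out: each iteration of the step~\ref{alg_bound} loop that does not exit replaces $t$ by $t-\delta$ with $\delta$ fixed, so after finitely many iterations $t$ would drop below $\min_F f(x)$, contradicting Lemma~\ref{bound_lemma}. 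Hence step~\ref{alg_branch} is reached infinitely often and $\delta \to 0$.

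For the second assertion, fix $\theta > 0$. I want to show that eventually $t < \min_F f(x) + \theta$. Since by the first part $\delta \to 0$, choose a stage of the algorithm at which $\delta < \theta$ and at which (using Lemma~\ref{filled_lemma} applied to the threshold $t_0 := \min_F f(x) + \theta$, which satisfies $t_0 > \min_F f(x)$) the current box size is small enough that every cover of $F(t_0)$ by the active boxes contains a filled box. Here I need Corollary~\ref{feasible_corollary}: the active boxes $\mathcal{X}$ always cover $F(t)$ for the current threshold $t$, and in particular they cover $F(t_0)$ once $t \ge t_0$. Now consider what happens at step~\ref{alg_bound}: as long as $t - \delta \ge t_0$, the set $F(t-\delta) \supseteq F(t_0)$ is covered by the active boxes, so by Lemma~\ref{filled_lemma} there is a filled box for threshold $t-\delta$, and the loop does not exit — it instead decreases $t$ to $t-\delta$. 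Thus within this single visit to step~\ref{alg_bound} the threshold keeps dropping by $\delta$ until $t - \delta < t_0$, i.e. until $t < t_0 + \delta < \min_F f(x) + \theta + \theta$. To get the sharper bound $t < \min_F f(x) + \theta$ as stated, I would instead apply the argument with $\theta/2$ in place of $\theta$ and require $\delta < \theta/2$ at the chosen stage, so that the terminal threshold satisfies $t < (\min_F f(x) + \theta/2) + \theta/2 = \min_F f(x) + \theta$; and since $t$ is nonincreasing thereafter, (\ref{convergence_equation}) holds at every subsequent step.

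The main obstacle I anticipate is the bookkeeping around \emph{which} loop is responsible for the threshold decrease and ensuring the hypotheses of Lemma~\ref{filled_lemma} are actually in force at the moment we need them — in particular, that the box size at the chosen stage is simultaneously small enough for the "filled box exists" conclusion \emph{and} that this smallness persists through the relevant iterations of step~\ref{alg_bound} (the box size does not change within a single visit to step~\ref{alg_bound}, only $t$ and $\mathcal{X}$ do, so this is fine, but it needs to be said). A secondary subtlety is that Lemma~\ref{filled_lemma} gives a $\delta$ depending on the threshold $t_0$, so I must fix $t_0$ first and only then invoke $\delta \to 0$ to reach a stage below that $\delta$; doing it in the wrong order would be circular. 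Once these dependencies are ordered correctly the argument is routine.
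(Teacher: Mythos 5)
Your proposal is correct and follows essentially the same route as the paper's own proof: the first claim via finiteness of the step~\ref{alg_bound} loop (threshold decreasing by a fixed $\delta$ but bounded below by Lemma~\ref{bound_lemma}, forcing infinite passage through step~\ref{alg_iterate}), and the second via Corollary~\ref{feasible_corollary} plus Lemma~\ref{filled_lemma} applied to $t' = \min_F f(x) + \theta/2$, yielding $t < t' + \delta$ and then concluding once $\delta < \theta/2$. The $\theta/2$ adjustment you flag is exactly what the paper does.
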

\begin{proof}
    First notice that step~\ref{alg_bound} of the algorithm finishes
    after finite number of iterations. Indeed, $t$ is decreased by
    a constant value $\delta$ and by lemma~\ref{bound_lemma}
    it is strictly bounded from below by the problem solution. Consequently,
    if the algorithm does not stop at step~\ref{alg_test}
    then it proceeds infinitely
    through step~\ref{alg_iterate} and
    \[
        \delta\to 0.
    \]

    Now consider any $\theta > 0$ and put
    \[
        t' = \min_F f(x) + \frac{\theta}{2}.
    \]
    Suppose $t > t'$, otherwise the corollary statement already holds.
    By corollary~\ref{feasible_corollary} the set of active boxes covers
    \[
        F(t) = \{x\in F \mid f(x) < t\}
    \]
    at every step of the algorithm. Obviously, $F(t') \subset F(t)$,
    so the same boxes cover $F(t')$. We proved above that box sizes
    \[
        \delta(X_i) = \frac{1}{S}\delta
    \]
    eventually become as small as needed if the algorithm does not stop.
    It then follows from
    lemma~\ref{filled_lemma} that there exists a filled box at some
    iteration of the algorithm for threshold $t'$. Consequently,
    the iteration of step~\ref{alg_bound} continues at least until
    $t - \delta < t'$ equivalent to $t < t' + \delta$. This holds for any
    $\delta$ in the algorithm and $\delta\to 0$, so eventually
    \[
        \delta < \frac{\theta}{2}
    \]
    is satisfied. Combining the inequalities we obtain
    \[
        t < t' + \delta < \min_F f(x) + \frac{\theta}{2} + 
        \frac{\theta}{2} = \min_F f(x) + \theta.
    \]
\end{proof}

\begin{corollary}
    \label{result_corollary}
    Consider the inverse graphical optimization algorithm for the problem
    of form (\ref{problem}). Suppose the domain estimation procedure is
    problem approximating. Then step~\ref{alg_result} of the algorithm finishes
    finding a filled box after finite number of attempts.
\end{corollary}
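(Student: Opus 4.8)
The plan is to reuse the machinery already assembled in Lemma~\ref{bound_lemma}, Corollary~\ref{feasible_corollary} and Lemma~\ref{filled_lemma}, exactly as in the proof of Corollary~\ref{decreasing_corollary}. Suppose the algorithm reaches step~\ref{alg_result} in the branch where no filled box has been found. By Lemma~\ref{bound_lemma} the current threshold satisfies $t > \min_F f(x)$, so Lemma~\ref{filled_lemma} applies to this $t$ and furnishes a $\delta_0 > 0$ with the property that every cover of $F(t) = \{x \in F \mid f(x) < t\}$ by boxes of diameter at most $\delta_0$ contains a filled box. It then suffices to show that the splitting loop of step~\ref{alg_result} produces such a cover after finitely many rounds.

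First I would verify that the active boxes keep covering $F(t)$ throughout the loop. By Corollary~\ref{feasible_corollary} they cover $F(t)$ on entry; each round either re-evaluates the estimation procedure and discards empty boxes, or subdivides a box into $2^n$ equal parts, and neither operation can destroy the cover: subdivision leaves the union unchanged, while an empty box contains by definition no point $x$ with $x \in F$ and $f(x) \le t$, hence no point of $F(t)$ (and by Lemma~\ref{subbox_lemma} the same holds for any of its sub-boxes). So after every round the remaining active boxes still cover $F(t)$, and — since no filled box has appeared — each of them is empty or boundary.

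Next I would track their diameters. By the remark in step~\ref{alg_iterate}, on entry to step~\ref{alg_result} every active box has diameter $\delta/S$, and after $k$ subdivision rounds every active box has diameter $\delta/(S\,2^k)$, which tends to $0$. Hence there is a finite $k$ with $\delta/(S\,2^k) \le \delta_0$; at that stage the active boxes cover $F(t)$ by boxes of diameter at most $\delta_0$, so one of them must be filled by the choice of $\delta_0$. Therefore step~\ref{alg_result} terminates after at most $k$ rounds.

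I expect the only subtle point to be the second paragraph — pinning down precisely which boxes are split or pruned in the loop and confirming that none of the ones needed to cover $F(t)$ is ever lost; the remaining argument is the same size-shrinking reasoning already used for Corollary~\ref{decreasing_corollary}.
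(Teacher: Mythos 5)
Your proposal is correct and follows essentially the same route as the paper's proof: invoke Corollary~\ref{feasible_corollary} to keep the active boxes covering $F(t)$, note that repeated splitting drives their diameters to zero, and conclude via Lemma~\ref{filled_lemma} that a filled box must appear after finitely many rounds. Your version is simply more explicit than the paper's (in particular in citing Lemma~\ref{bound_lemma} to ensure $t > \min_F f(x)$ and in checking that pruning empty boxes preserves the cover), which is a welcome elaboration rather than a different argument.
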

\begin{proof}
    After every attempt at step~\ref{alg_result} we split all the boundary boxes
    reducing their sizes. By corollary~\ref{feasible_corollary} these boxes
    cover
    \[
        F(t) =  \{x\in F \mid f(x) < t\}.
    \]
    Eventually box sizes become sufficiently small, so 
    it then follows from lemma~\ref{filled_lemma} that there exists a filled box
    and the iterating stops.
\end{proof}

\subsection{Proof of theorem~\ref{finish_theorem}}
\begin{proof}
    Consider target accuracy $\varepsilon > 0$ and corresponding
    $\delta(\varepsilon)$, $\theta(\varepsilon)$ from definition
    \ref{convergent_definition}.
    By corollary~\ref{decreasing_corollary} after finite number of steps within
    the inverse graphical optimization algorithm we obtain
    \[
        t < \min_F f(x) + \theta(\varepsilon)
    \]
    and $\delta < \delta(\varepsilon)$ if the algorithm does not stop before.
    Hence, by definition \ref{convergent_definition}
    the convergent accuracy estimation procedure at step~\ref{alg_test_comp}
    of the algorithm produces
    \[
        \delta' < \varepsilon
    \]
    after finite number of iterations. The algorithm then
    proceeds to step~\ref{alg_result} which finishes in finite number of
    steps by corollary~\ref{result_corollary}. So, the whole algorithm
    stops.
\end{proof}

\subsection{Proof of theorem~\ref{accuracy_theorem}}
\begin{proof}
    Indeed, the accuracy estimation procedure stops only when the result of
    domain estimation with threshold $t - \delta'$ contains only empty boxes.
    This means by definition that for any $x\in F$
    \[
        f(x) > t - \delta'
    \]
    or
    \[
        \delta' > t - f(x).
    \]
    Note that $f(x)$ is continuous and $F$ is compact by the problem statement,
    so there exists $x_{min}$ such that
    \[
        \min_F f(x) = f(x_{min}).
    \]
    Combining with the previous inequality we obtain
    \[
        \delta' > t - f(x_{min}) = t - \min_F f(x).
    \]
\end{proof}

\subsection{Proof of theorem~\ref{initial_theorem}}
\begin{proof}
    First statement of the theorem is straightforward.
    Indeed, the initial threshold
    search procedure returns $t$ only if the domain estimation procedure with
    threshold $t$ produces a filled box $B$. The latter means by definition that
    for any point $x\in B$ we have $x\in F$ and
    \begin{equation}
        \label{upper_bound_statement}
        t > f(x) \ge \min_F f(x).
    \end{equation}
    Note that we start with non-empty box and only split boxes into $2^n$ equal
    parts, so $B\ne\emptyset$ and (\ref{upper_bound_statement}) actually holds.

    Let us prove the second statement. Notice that the algorithm runs infinitely
    through step~\ref{it_increase} if it does not stop. So, eventually
    \[
        t > \min_F f(x)
    \]
    becomes true and by problem definition there is a point $x\in F$ such that
    $t > f(x)$. After that the domain estimation procedure always produces
    a boundary or filled box by definition. This forces the initial threshold
    search procedure to either stop or run infinitely through
    step~\ref{it_split}.
    In the latter case the boxes become arbitrarily small and
    the problem approximating domain estimation procedure eventually produce
    a filled box. The initial threshold search algorithm stops after that.
\end{proof}

\subsection{Proof of theorem~\ref{lipschitz_theorem}}
\begin{lemma}
    \label{pattern_continuity}
    Consider a star-shaped polytope $K$ having no faces coplanar with
    its origin $0$. Let $a \ne b$ be points in $\mathbb{R}^n$ such that
    rays $0a$, $0b$ intersect the same simplex face of $K$. Then for any
    point $x$ of the cut $[a,b]$ ray $0x$ intersects $\partial K$ at
    single point belonging to the same face.
\end{lemma}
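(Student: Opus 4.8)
The plan is to work entirely within the 2-dimensional plane $P$ spanned by the three points $0$, $a$, $b$ (assuming first that $0,a,b$ are not collinear; the collinear case is degenerate and handled separately). Let $\sigma$ be the simplex face of $K$ that both rays $0a$ and $0b$ meet, and let $H$ be the hyperplane containing $\sigma$. By hypothesis $0\notin H$, so $H$ does not pass through the origin, and hence the line $\ell = H\cap P$ is a line in $P$ not through $0$. First I would show that every ray $0x$ with $x\in[a,b]$ meets $\ell$ in exactly one point: since $\ell$ does not contain $0$, a ray from $0$ meets $\ell$ at most once, and it meets $\ell$ at all provided $x$ lies on the same side of $\ell$ as... — more carefully, the intersection points of $0a$ and $0b$ with $\sigma$, call them $p_a$ and $p_b$, lie in the relatively open half-plane of $P\setminus\ell$ that does not contain $0$ (they lie on $\ell$ itself, actually, so I should say: $p_a,p_b\in\ell$). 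The key geometric fact is that the cut $[p_a,p_b]\subset\ell$, and for each $x\in[a,b]$ the ray $0x$ passes through a unique point $q(x)$ of the segment $[p_a,p_b]$; this follows because the map sending $x\in[a,b]$ to the intersection of ray $0x$ with $\ell$ is a projective (perspectivity) map from the segment $[a,b]$ onto the segment $[p_a,p_b]$, which is well-defined and continuous as long as $[a,b]$ does not cross the line through $0$ parallel to $\ell$ — and it does not, because $a$ and $b$ are both strictly on the far side of that parallel line (being such that their rays reach $\ell$ in the forward direction).

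Next I would use convexity of the simplex $\sigma$: since $p_a = q(a)\in\sigma$ and $p_b = q(b)\in\sigma$ and $\sigma$ is convex, the whole segment $[p_a,p_b]$ lies in $\sigma$. Therefore for every $x\in[a,b]$ the point $q(x)\in[p_a,p_b]\subset\sigma\subset\partial K$, so the ray $0x$ meets $\partial K$ at a point of the same face $\sigma$. Finally I would argue this intersection point is unique on $\partial K$: because $K$ is star-shaped with respect to $0$, the ray $0x$ meets $\partial K$ in exactly one point (this is the defining property of star-shapedness restated — every boundary point is "visible" from $0$, and the segment from $0$ to a boundary point meets $\partial K$ only at that endpoint), so $q(x)$ is that unique point.

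The main obstacle I expect is the bookkeeping around the perspectivity map and the non-degeneracy conditions: one must verify that the segment $[a,b]$ genuinely stays within the region from which $\ell$ is "reachable by a forward ray from $0$," i.e.\ that no $x\in[a,b]$ has its ray $0x$ parallel to $\ell$ or hitting $\ell$ on the wrong (origin) side. This is where the hypothesis that $0a$ and $0b$ both hit the \emph{same} face $\sigma$ does the real work: it pins $p_a$ and $p_b$ to a common line $\ell$ missing $0$ and on a common side, and then convexity of $[a,b]$ plus convexity of $[p_a,p_b]$ closes the argument. I would also need to dispose of the collinear case $0\in\mathrm{aff}\{a,b\}$ separately: if $0$, $a$, $b$ are collinear then rays $0a$ and $0b$ are either equal or opposite; if equal, the claim is immediate since the ray is constant along $[a,b]$; the opposite case cannot occur because then the two rays could not meet the same face $\sigma$ unless $\sigma$ itself contained $0$ in its affine hull, contradicting the coplanarity hypothesis.
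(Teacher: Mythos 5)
Your main argument is essentially the paper's: the paper also reduces to the hyperplane $H$ of the common face, shows the central projection from $0$ maps $x=\lambda a+(1-\lambda)b$ to a \emph{convex combination} of $D(a)$ and $D(b)$ (it does this by explicit algebra in $\mathbb{R}^n$ with $D(a)=sa$, $D(b)=kb$ and the hyperplane equation $z\cdot n=c$, $c\neq 0$, rather than by passing to the plane spanned by $0,a,b$), and then invokes convexity of the simplex face to conclude $D(x)\in[D(a),D(b)]\subset F$. Your perspectivity phrasing buys nothing and costs you the separate collinear case, which the paper's coordinate computation handles uniformly; but your verification that $[a,b]$ stays on the correct side of the line through $0$ parallel to $\ell$ is the right non-degeneracy check and matches the paper's observation that $\mu>0$.

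There is, however, one genuine gap: your justification of uniqueness of the intersection of ray $0x$ with $\partial K$. You assert this is ``the defining property of star-shapedness restated,'' i.e.\ that the segment from $0$ to a boundary point meets $\partial K$ only at that endpoint. That is false for star-shaped sets in general: visibility only requires $[0,p]\subset K$, and since $K$ is closed this is compatible with the ray meeting $\partial K$ along an entire segment lying in a face whose affine hull passes through $0$ (e.g.\ an L-shaped polygon with $0$ on the line of one of its edges). Uniqueness is exactly where the hypothesis that no face is coplanar with $0$ must be used, and the paper devotes the first paragraph of its proof to this: two intersection points on one ray must lie in a common face (else the nearer one obstructs visibility of the farther), and then the line through them contains $0$ and lies in the face's affine hull, contradicting the coplanarity hypothesis. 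Your proof needs this argument inserted; as written, the uniqueness step would fail for a general star-shaped polytope.
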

\begin{proof}
    First notice that any ray starting at $0$ intersects $\partial K$ at
    single point. Indeed, any two points of intersection lie 
    in the same face, otherwise one of them is not visible from $0$
    violating star-shapeness condition. Thus, if there are 
    two distinct intersection points then all the line
    passing through these points and containing $0$ is coplanar
    with the face, which is not allowed by the lemma statement.

    Now denote by $D(y)$ the intersection point of ray $0y$ and $\partial K$.
    It remains to show that for any $x\in[a,b]$ $D(x)$ lies in face
    $F\subset \partial K$ whenever $D(a)\in F$ and $D(b)\in F$. In the rest
    of the proof we interpret all the points as vectors starting at $0$.

    By lemma statement $0\not\in F$, so $D(a)\ne 0$, $D(b)\ne 0$, and
    by definition any $D(y)$ is collinear with $y$.
    Thus, there exist $s>0$, $k>0$ such that
    \[
        D(a) = sa,\,D(b) = kb.
    \]
    Moreover, $D(a)$, $D(b)$ lie in the same hyperplane containing face $F$, so
    there exist $n\in\mathbb{R}^n$, $c\in\mathbb{R}$ such that
    \[
        D(a)\cdot n = c,\,D(b)\cdot n = c.
    \]
    Combining equations we acquire
    \[
        a\cdot n = \frac{c}{s},\, b\cdot n = \frac{c}{k}.
    \]
    Now consider any $x\in [a,b]$. The latter means that there exists
    $\lambda\in[0,1]$ such that
    \[
        x = \lambda a + (1-\lambda) b.
    \]
    Let $D$ be the intersection point of line passing through $0$ and $x$ with
    hyperplane containing face $F$. This means that there exists
    $\mu\in\mathbb{R}\cup\infty$ such that
    \[
        D = \mu x,\, D\cdot n = c.
    \]
    Combining with previous equations we obtain
    \[
        c = \mu x\cdot n = \mu\lambda a\cdot n +\mu(1-\lambda) b\cdot n = 
        \mu\lambda \frac{c}{s} + \mu(1-\lambda) \frac{c}{k}.
    \]
    Notice that by lemma statement $0$ is not coplanar with $F$, so $c\ne 0$.
    Therefore, from the previous equation
    \[
        \mu = \frac{sk}{\lambda k + (1-\lambda)s} \in \mathbb{R}
    \]
    Moreover, $s>0$ and $k>0$, so $\mu>0$, $\mu\ne\infty$ and $D = \mu x$ is
    the intersection
    of ray $0x$ and hyperplane of face $F$. We can express it as follows
    \begin{multline*}
        D = \frac{sk}{\lambda k + (1-\lambda)s}
        \left(\lambda a + (1-\lambda)b\right) = \\
        \frac{sk}{\lambda k + (1-\lambda)s}\left(\frac{\lambda}{s} D(a) + 
        \frac{1-\lambda}{k} D(b)\right) = \\
        \frac{\lambda k}{\lambda k + (1-\lambda)s} D(a) +
        \frac{(1-\lambda)s}{\lambda k + (1-\lambda)s} D(b).
    \end{multline*}
    From the latter we can see that $D$ is a convex combination
    of $D(a)$ and $D(b)$.
    Therefore, $D\in[D(a),D(b)]\subset F$, because by our convention all
    the faces are simplices. As was shown before, every ray starting at
    $0$ intersects $\partial K$ at single point, so $D(x) = D\in F$.
\end{proof}

\begin{lemma}
    \label{lipschitz_lemma}
    Consider a star-shaped polytope $K$ having no faces coplanar with
    its origin $0$. Let $y_1$ and $y_2$ be points in $\mathbb{R}^n$ such that
    rays $0y_1$, $0y_2$ intersect the same simplex face $f$ of $K$.
    Suppose that $r_1>0$ and
    \[
        y_1 \in r_1K
    \]
    then
    \[
        y_2 \in \left(r_1 + L|y_2 - y_1|\right) K
    \]
    where
    \[
        L = \frac{1}{\Delta}
    \]
    and $\Delta$ is the minimum distance from origin $0$ to hyperplanes
    containing pattern faces.
\end{lemma}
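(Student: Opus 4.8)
The plan is to collapse the whole statement to a single scalar read off along each point's ray and then notice that this scalar is affine once the ray is constrained to meet the common face. I would use the map $D(\cdot)$ from lemma~\ref{pattern_continuity}: for $y\ne 0$ the ray $0y$ meets $\partial K$ in the unique point $D(y)$, which is collinear with $y$. Since $K$ is compact and star-shaped about $0$, the intersection of $K$ with the ray $0y$ is exactly the segment $[0,D(y)]$, and the same scaling applies to every dilate, so $rK\cap(0y)=[0,rD(y)]$. Hence $y\in rK$ if and only if $|y|\le r\,|D(y)|$, i.e. if and only if $g(y):=|y|/|D(y)|\le r$. Thus the claim reduces to $g(y_2)\le r_1+L|y_2-y_1|$, and since $g(y_1)\le r_1$ is precisely the hypothesis $y_1\in r_1K$, it suffices to prove $g(y_2)-g(y_1)\le L|y_2-y_1|$.

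Next I would write $g$ down explicitly using the face $f$. Let $H=\{z:z\cdot n=c\}$ be the hyperplane spanned by $f$, with $n$ a unit normal; because $0\notin H$ by hypothesis we may orient $n$ so that $c>0$, and then $c$ equals the distance from $0$ to $H$, so $c\ge\Delta$. As rays $0y_1$ and $0y_2$ both meet $f\subset H$, we have $D(y_i)=\mu_i y_i$ with $\mu_i>0$ and $\mu_i(y_i\cdot n)=c$; this forces $y_i\cdot n>0$ and gives $|D(y_i)|=\frac{c}{y_i\cdot n}|y_i|$. Therefore
\[
    g(y)=\frac{|y|}{|D(y)|}=\frac{y\cdot n}{c},
\]
so on the set of points whose ray hits $f$, $g$ is a linear functional scaled by $1/c$.

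The Lipschitz estimate is then one line: by the Cauchy--Schwarz inequality and $|n|=1$,
\[
    g(y_2)-g(y_1)=\frac{(y_2-y_1)\cdot n}{c}\le\frac{|y_2-y_1|}{c}\le\frac{|y_2-y_1|}{\Delta}=L|y_2-y_1|,
\]
and combining with $g(y_1)\le r_1$ yields $g(y_2)\le r_1+L|y_2-y_1|$, i.e. $y_2\in(r_1+L|y_2-y_1|)K$. I expect the only real care to lie in the bookkeeping of sign conditions — $c>0$, $\mu_i>0$, $y_i\cdot n>0$ — which legitimize both the closed form for $g$ and the equivalence $y\in rK\iff g(y)\le r$; once those are verified, the lemma is essentially the observation that an affine function whose gradient has norm $1/c\le L$ is $L$-Lipschitz.
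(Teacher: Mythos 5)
Your proof is correct. It establishes the same fact as the paper but organizes the argument differently. The paper works synthetically with the scaled copies $r_1K$ and $r_2K$: it introduces the parallel hyperplanes $H_1$, $H_2$ carrying the scaled faces $r_1f$, $r_2f$, computes their separation $(r_2-r_1)d\ge|y_2-y_1|$, and concludes by a which-side-of-the-hyperplane argument that $y_2$ stays on the origin's side of $H_2$. You instead make the Minkowski gauge explicit, showing that on the cone of directions hitting the face $f$ it equals the linear functional $g(y)=(y\cdot n)/c$, after which the Lipschitz bound is a one-line Cauchy--Schwarz estimate using $c\ge\Delta$. Both arguments reduce to exactly the same inequality (the distance from $0$ to the face's hyperplane is at least $\Delta$) and both use Lemma~\ref{pattern_continuity} only through the uniqueness of the boundary intersection point $D(y)$; your version is shorter and makes the mechanism transparent (the gauge is affine on each face's cone with gradient of norm $1/c\le L$), at the cost of having to justify the equivalence $y\in rK\iff g(y)\le r$, which you correctly ground in compactness, $0\in\sint(K)$, and the single-intersection property. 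The sign bookkeeping you flag ($c>0$, $\mu_i>0$, $y_i\cdot n>0$) is handled adequately, so there is no gap.
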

\begin{proof}
    Let us put
    \[
        r_2 = r_1 + L|y_2 - y_1|
    \]
    and consider scaled patterns $r_1K$ and $r_2K$.
    Notice that for any $r>0$ homothety with center $0$ and ratio $r$
    transforms $K$ into $rK$ and lefts all the rays starting from $0$ in place.
    Therefore, for any $y\in\mathbb{R}^n$ and $r>0$ ray $0y$ intersects
    the boundary of $rK$
    at image $rf$ of face $f$ whenever $0y$ intersects $\partial K$ at $f$.
    Let us denote by $D_1$, $D_2$ intersection points of rays $0y_1$, $0y_2$
    with faces $r_1f$, $r_2f$ and by $H_1$, $H_2$ hyperplanes
    containing $r_1f$, $r_2f$ respectively, see Fig. \ref{lipschitz_face_figure}.

    \begin{figure}[h]
        \begin{center}
            \includegraphics[width=0.5\textwidth]{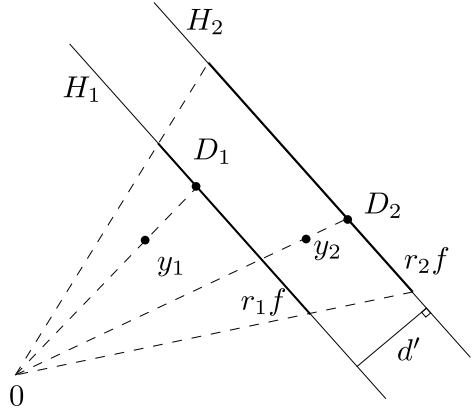}
        \end{center}
        \caption{Proof of lemma~\ref{lipschitz_lemma}}
        \label{lipschitz_face_figure}
    \end{figure}

    Note that $D_1$ and $D_2$ are
    unique due to lemma~\ref{pattern_continuity}.
    It then follows from lemma assumption $y_1\in r_1K$ and star-shapeness of
    $K$ that $|0y_1| \le |0D_1|$, so $y_1$ and $0$ are in the same half-space
    bounded by $H_1$. In addition, notice that
    $r_2 > r_1 > 0$, so it follows from the properties of homothety that
    $H_2$ is further from $0$ and parallel to $H_1$. Consequently, point $0$ and
    $H_2$ lie on the different sides of $H_1$. Combining with the above
    we deduce that $y_1$ and $H_2$ lie on the different sides of $H_1$ as well.
    Thus,
    the distance from $y_1$ to $H_2$ is greater than the distance between
    $H_1$ and $H_2$. Let us estimate the latter.
    Denote by $d$ the
    distance from $0$ to hyperplane containing $f$ in non-scaled pattern $K$. It
    then follows from
    the properties of homothety that the distance between parallel hyperplanes
    $H_1$ and $H_2$ is
    \[
        d' = (r_2 - r_1)d = Ld|y_2-y_1|.
    \]
    By the lemma statement $L = \frac{1}{\Delta}$ and $\Delta \le d$, therefore
    $Ld \ge 1$ and $d' \ge |y_2 - y_1|$. Notice that $|y_2 - y_1|$ is precisely
    the distance between $y_1$ and $y_2$. Now remember that the
    distance between $y_1$ and $H_2$ is greater than $d'$,
    therefore points $y_1$ and $y_2$, as well as $0$, lie in the same half-space
    bounded by $H_2$. Consequently, 
    \[
        |0y_2| \le |0D_2|
    \]
    by definition of $D_2$. It then follows from star-shapeness of $K$
    that
    \[
        y_2 \in r_2K.
    \]
\end{proof}

\begin{proof}[of theorem~\ref{lipschitz_theorem}]
    Consider radius value at any point $x_1\in Q$:
    \[
        r_1 = r(x_1) = \max\{r\ge 0 \mid x_1+rK\subset Q\}
    \]
    and let $x_2\in Q$ be any other point.
    Obviously, there exists a point $D\in (x_1 + r_1K)\cap \partial Q$,
    otherwise $r_1$ can be increased a little bit keeping
    $x_1 + r_1K \subset Q$. In the rest of the proof we treat all the points
    as vectors in $\mathbb{R}^n$ starting at $0$.
    For any point $y\in\mathbb{R}^n$ let us
    denote by $F(y)$ a single pattern face intersecting ray $0y$. Put
    \[
        D_1 = D - x_1,\;D_2 = D - x_2
    \]
    and consider cut $\left[D_1, D_2\right]$.
    From lemma~\ref{pattern_continuity} it follows that any set
    \begin{equation}
        \label{const_face_cut}
        \{y\in \left[D_1,D_2\right] \mid F(y) = f\}
    \end{equation}
    of points mapping to the same face $f$ is a continuous cut inside
    $\left[D_1, D_2\right]$. By definition the pattern boundary consists of
    finite
    number of faces, so $\left[D_1, D_2\right]$ is split into finite number
    of cuts of form (\ref{const_face_cut}), i.e.
    \[
        \left[D_1, D_2\right] = \bigcup_{i=1}^{n-1} \left[y_i, y_{i+1}\right],
    \]
    where the points of every cut $\left[y_i, y_{i+1}\right]$ map to the
    same face under $F$. By definition $D\in x_1 + r_1K$, so
    $D_1 \in r_1K$. Applying
    lemma~\ref{lipschitz_lemma} to every pair of points $y_i$, $y_{i+1}$
    starting with $D_1$ we obtain
    \[
        D_2 \in \left(r_1 + \sum_{i=1}^{n-1} L|y_{i+1} - y_i|\right)K =
        \left(r_1 + L|D_2-D_1|\right)K.
    \]
    The latter combined with definitions of $D_1$ and $D_2$ leads to
    \begin{equation}
        \label{lipschitz_point}
        D = x_2 + D_2 \in x_2 + \left(r_1 + L|x_2 - x_1|\right)K.
    \end{equation}
    Now put
    \[
        r_2 = r_1 + L|x_2 - x_1|
    \]
    and consider any $r > r_2$.
    It then follows from (\ref{lipschitz_point}) and star-shapeness
    of the pattern that
    \begin{equation}
        \label{lipschitz_varpoint}
        D\in x_2 + r_2K \subset x_2 + rK.
    \end{equation}
    From lemma~\ref{pattern_continuity} we obtain that ray $x_2 D$
    intersects $\partial\left(rK\right)$ at single point $D'$, and
    by the properties of homothety
    \[
        D' = x_2 + \frac{r}{r_2}\left(D - x_2\right) \ne D.
    \]
    Combining the latter with (\ref{lipschitz_varpoint}) we deduce that $D$
    is an interior point of $x_2 + rK$. By construction $D\in\partial Q$,
    therefore $x_2 + rK\not\subset Q$ for any $r > r_2$.
    The latter immediately leads to
    \[
        r(x_2) \le r_2 = r_1 + L|x_2 - x_1|
    \]
    which is the desired Lipschitz property.
\end{proof}

\subsection{Proof of theorem~\ref{domain_estimation_theorem}}
\begin{proof}
    First notice that all the filled boxes in the inverse graphical
    scheme are marked precisely by the algorithm of section
    \ref{inverse_impl_section}.
    Indeed, by (\ref{erosion_relation}) at any step of the inverse
    graphical algorithm box $B$ is actually filled if and only if $B$ lies
    inside contour $Q$ and outside $\partial Q + t(-K)$. Moreover,
    Minkowski sum $\partial Q + t(-K)$ is the set union of convex elemets
    \cite{Varadhan2006}, so $B$ lies outside $\partial Q + t(-K)$ if and
    only if $B$ lies outside all the convex elements. So, every actually
    filled box does not intersect any geometry within the algorithm and
    is voxelized precisely as outer for every convex element and inner
    for the contour. This leads to precise marking at step~\ref{vox_unite}
    of the algorithm.

    Now we trivially check definition~\ref{approx_def} for the algorithm
    of this section.
    In scope of problem~\ref{problem} suppose $x\in \sint(F)$ and $f(x) < t$.
    By the continuity of $f$ $f(x) < t$ holds in some neighbourhood
    $U(x)\subset F$.
    For sufficiently small $\delta$ any box $B$ with $\delta(B) <\delta$
    containing $x$ lies inside $U$. By the definition such boxes are filled,
    so they are marked precisely, i.e. $C(B, t) = 1$.
\end{proof}

\subsection{Proof of theorem~\ref{lipschitz_accuracy}}
\begin{proof}
    Note that (\ref{dc_problem}) is a maximization problem,
    so all the relations of
    radius value and threshold through the proof will be in accordance to
    the problem and opposite to the study in section~\ref{gen_section}.

    Now consider any $\varepsilon > 0$ and let us specify values of
    $\delta(\varepsilon)$ and $\theta(\varepsilon)$ to satisfy definition
    \ref{convergent_definition}.
    Indeed, by putting
    \[
        \delta(\varepsilon) = \frac{S}{S + L}\varepsilon
    \]
    we easily acquire
    \[
        \delta' < \left(1 + \frac{L}{S}\right) \delta(\varepsilon) \le
        \varepsilon
    \]
    for the accuracy estimate $\delta'$
    whenever the curent accuracy satisfies $\delta < \delta(\varepsilon)$.
    The value of $\theta(\varepsilon)$ have no impact on the further
    study, so we just put $\theta(\varepsilon) = \varepsilon$.
    It then remains
    to make sure that $\delta'$ is the actual accuracy estimate, i.e.
    \[
        \delta' \ge \max_Q r(x) - t
    \]
    holds at every step~\ref{alg_test_comp} of the inverse graphical
    optimization algorithm. Indeed, the move from step~\ref{alg_bound}
    to step~\ref{alg_test} in the optimization algorithm happens
    only when there are no boxes marked
    as filled by the domain estimation procedure
    for threshold $t + \delta$. In addition, it have been shown 
    during the proof of theorem \ref{domain_estimation_theorem} that
    all the actually filled boxes are marked as filled.
    This means that every remaining active box
    $X_i\in\mathcal{X}$ is either outside the contour or contains
    a point $x_i$ within $\partial Q + (t + \delta)(-K)$ meaning that
    \[
        r(x_i) \le t + \delta.
    \]
    By theorem~\ref{lipschitz_theorem} for every $x\in X_i\cap Q$
    \[
        r(x) - r(x_i) \le L\left|x-x_i\right| \le L\delta(X_i),
    \]
    where $\delta(X_i)$ is the diameter of active box $X_i$.
    Therefore,
    \[
        r(x) \le t + \delta + L\delta(X_i)
    \]
    for every $x\in Q$ within active boxes.
    By step~\ref{alg_iterate} of the algorithm $\delta = S\delta(X_i)$,
    hence
    \begin{equation}
        \label{tp_estimate}
        r(x) \le t + \delta + \frac{L}{S}\delta = t + \delta'.
    \end{equation}
    This holds for every feasible point in every active box. For all the prunned
    boxes $r(x) \le t$ by step~\ref{alg_bound} of the algorithm. Therefore,
    (\ref{tp_estimate}) holds globally and
    \[
        \max_Q r(x) \le t + \delta'
    \]
    or equivalently
    \[
        \delta' \ge \max_Q r(x) - t.
    \]
    This is exactly the definition of the actual accuracy estimate holding at
    every step~\ref{alg_test_comp} of the algorithm.
\end{proof}

%===ACKNOWLEDGEMENTS============================================================
\section*{Acknowledgements}
The author wishes to thank Vladimir Valedinsky(Lomonosov Moscow State University)
for his help at the early stages of the work.

%===BIBLIOGRAPHY================================================================

\end{document}